\newcommand{\PreserveBackslash}[1]{\let\temp=\\#1\let\\=\temp}
\newcolumntype{C}[1]{>{\PreserveBackslash\centering}p{#1}}
\newcolumntype{R}[1]{>{\PreserveBackslash\raggedleft}p{#1}}
\newcolumntype{L}[1]{>{\PreserveBackslash\raggedright}p{#1}}
\def\wbar{\accentset{{\cc@style\underline{\mskip8mu}}}}
\newcommand{\R}{\ensuremath{\mathbb{R}}}
\theoremstyle{plain}
\newtheorem{theorem}{Theorem}
\newtheorem{defn}{Definition}[section]
\newtheorem{lemma}{Lemma}
\newtheorem{remark}{Remark}[section]
\newtheorem{cor}{Corollary}[section]
\newtheorem{pro}{Proposition}[section]
\newtheorem{example}{Example}[section]
\begin{document}

\title{Spectral bounds of multi-way Cheeger constants via cyclomatic number}
\author{Chuanyuan Ge}

\footnotetext{School of Mathematical Sciences, 
University of Science and Technology of China, Hefei 230026, China. \\
Email address:
{\tt gechuanyuan@mail.ustc.edu.cn} }
\date{}\maketitle
\begin{abstract}
As a non-trivial extension of the celebrated Cheeger inequality, the higher-order Cheeger inequalities for graphs due to Lee, Oveis Gharan and Trevisan provide for each $k$ an upper bound for the $k$-way Cheeger constant in forms of $C(k)\sqrt{\lambda_k(G)}$, where $\lambda_k(G)$ is the $k$-th eigenvalue of the graph Laplacian and $C(k)$ is a constant depending only on $k$. In this article, we prove some new bounds for multi-way Cheeger constants. By shifting the index of the eigenvalue via cyclomatic number, we establish upper bound estimates with an absolute constant instead of $C(k)$. This, in particular, gives a more direct proof of Miclo's higher order Cheeger inequalities on trees. We also show a new lower bound of the multi-way Cheeger constants in terms of the spectral radius of the graph. The proofs involve the concept of discrete nodal domains and a probability argument showing generic properties of eigenfunctions.
\end{abstract}

\section{Introduction}
Given any finite graph $G=(V,E)$, let us list the eigenvalues of its Laplacian by counting multiplicity as follows:
\[\lambda_1(G)\leq \lambda_2(G)\leq \cdots \leq \lambda_{|V|}(G).\] For simplicity, in this introduction section, we restrict ourselves to normalized graph Laplacian where its vertex measure is equal to its degree. 

A fundamental result in spectral graph theory tells that $\lambda_{2}(G)=0$ if and only if the graph $G$ is disconnected. In general, the Cheeger constant $\rho_2(G)$ measures the connectivity of graphs quantitatively. 
The Cheeger inequality \cite{Alon86, AM85, Dodziuk, Fiedler73} relates the Cheeger constant $\rho_2(G)$ and the eigenvalue $\lambda_2(G)$ as below
$$ \frac{\lambda_2(G)}{2}\leq \rho_2(G)\leq \sqrt{2\lambda_2(G)}.$$
We remark that the proof of the first inequality above is relatively straightforward while the proof of the second inequality is far from obvious. The proof actually provides a spectral partitioning algorithm. Cheeger inequality plays a crucial role in various research areas, such as explicit constructions of expander graphs \cite{Alon86,Lubotzky,PV11,Tanner}, graph coloring \cite{AG84,AK97}, image segmentation \cite{SM00,TM06} and web search \cite{BP98,Kleinberg99}.

Concerning higher order eigenvalues, we have $\lambda_{k}(G)=0$ if and only if the graph $G$ has at least $k$ connected components. The multi-way Cheeger constants $\rho_k(G), k=1,2,\ldots,|V|$ (for precise definition see Section \ref{sec:pre}), introduced by Miclo \cite{Miclo08}, measure how difficult it is to partition the graph into $k$-clusterings. It was conjectured that there hold related higher-order Cheeger inequalities for weighted graphs \cite{DJM12}. In 2012, Lee, Oveis Gharan and Trevisan used the method of random metric partitions to confirm this conjecture \cite{LGT14}. They proved that
\begin{equation}\label{eq:higer}
   \frac{\lambda_k(G)}{2}\leq \rho_k(G)\leq Ck^2\sqrt{\lambda_k(G)},
\end{equation}
where $C$ is a universal constant which does not depend on $G$. This result gives theoretical support for spectral clustering algorithms, see e.g. \cite{NJW,SFNWNBB,Luxburg}. 
Miclo \cite{Miclo15} generalized higher-order Cheeger inequalities on graphs to weighted Riemannian manifolds and used them as a crucial tool to solve the conjecture of Simon and H\o egh-Krohn \cite{SH72}. The higher-order Cheeger inequalities have further been extended to signed graphs \cite{Liu15,AL20} and discrete and continuous magnetic Laplacians \cite{LLPP, Yan22}.

As in the case of $k=2$, the main difficulty of proving  (\ref{eq:higer}) lies in the second inequality. In general, the dependence on $k$ of the constant in the right-hand side of (\ref{eq:higer}) is necessary (see \cite[Section 4.4]{LGT14}). If we allow the index of the eigenvalues to be larger than the index of multi-way Cheeger constants, then the term $k^2$ can be improved to $\log k$. Exactly, it is proved \cite{LGT14,LRTV} that there exists absolute constants $c<1$ and $C_0$ such that 
\begin{equation}\label{eq:logk}
    \rho_{ck}(G)\leq \sqrt{C_0\log k\lambda_{k}(G)}.
\end{equation}
From the perspective of giving an upper bound on higher-order constants by eigenvalues, for large $k$, the inequalities \eqref{eq:higer} and \eqref{eq:logk} can be trivial because of the term $\log k$ or $k^2$, see Example \ref{ex:1}.

When the graph is a tree or a cycle, the dependence on $k$ of the constant is not necessary. In fact, it was proved \cite{DJM12,Miclo08} that $\rho_k(G)\leq C\sqrt{\lambda_k(G)}$ holds for a tree or a cycle with an absolute constant $C$. In this article, we first prove it holds for any graph that  $$\rho_{k-l}(G)\leq \sqrt{2\lambda_k(G)},$$ where $\ell=|E|-|V|+1$ is the cyclomatic number of $G$ \cite{Diestel}. For a more precise expression, the reader can see Theorem \ref{thm:main}. Secondly, we give an upper bound of $\rho_k(G)$ by the eigenvalue $\lambda_k(G)$ for some special product graph and special eigenvalues with an absolute constant independent to $k$, see Theorem \ref{thm:main3}. Our proof of these two theorems uses the concept of discrete nodal domains and a crucial argument on generic properties of eigenfunctions (see Lemma \ref{lemma:nozero}). The lemma shows that for any graph, we give a continuous perturbation on the edge weight and potential function, we have that all eigenvalues of its normalized Laplacian are simple with probability $1$. Moreover, when the graph is connected, all eigenfunctions of its normalized Laplacian have no zeros with probability $1$. The theory that gives a perturbation to make the spectrum of an operator to be simple is also important in other models. A remarkable and well-known result proved by Uhlenbeck shows that the spectrum of the Laplacian on $C^k$-Riemannian is always simple if we give a small perturbation on its metric \cite{Uhlenbeck}. This type of question is also considered on graphs and quantum graphs \cite{BL17,BL18,Friedlander,PPP}. Additionally, our results imply Miclo's result on trees, see Corollary \ref{cor:main}. Besides the upper bounds, we give a new lower bound of $\rho_k$ that 
$$\rho_k(G)\geq \left(1-\frac{1}{k}\right)\min \{\lambda_2(G), 2-\lambda_n(G)\}.$$
Compared with the left hand of Inequality \eqref{eq:higer}, this bound improved the constant $\frac{1}{2}$ but weakened $\lambda_k(G)$. 

A signed graph is a graph with a signature $\sigma:E\to \{+1,-1\}$.  The spectral theory of signed graphs has recently led to several breakthroughs in theoretical computer science and combinatorial geometry, such as the sensitivity conjecture \cite{Huang} and the spherical two-distance set problem \cite{JTYZZ}. One of the most important issues in the study of signed graphs is the Bilu-Linial conjecture \cite{BL06}. They conjectured that for any $d$-regular graph with $d\geq 3$, there exists a signature such that its spectral radius of the signed adjacency matrix will not exceed $2\sqrt{d-1}$. If this conjecture holds, then we can construct infinite families of Ramanujan graphs of every degree bigger than $2$ by using the $2$-lift. In 2015, Marcus, Spielman and Srivastava proved that this conjecture holds for the largest eigenvalues of the signed adjacency matrix by using the method of interlacing families \cite{MSS15}. Therefore, it should be necessary to consider higher-order Cheeger inequalities on signed graphs since they are closely related to eigenvalues. In Section \ref{sec:signed}, we review the definition of multi-way Cheeger constants on signed graphs and show that our Theorem \ref{thm:main} also holds with the signed multi-way Cheeger constants. 

 This article is structured as follows. In Section \ref{sec:pre}, we give the definition of higher-order Cheeger constants. We present Theorem \ref{thm:main}, Theorem \ref{thm:main3} and Theorem \ref{thm:main4} and their proof in Section \ref{sec:main1}, Section \ref{sec:main3} and Section \ref{sec:main4}, respectively. At the end of Section \ref{sec:main1}, we give an example to illustrate that for some graphs, our Theorem \ref{thm:main} is better than Inequality \eqref{eq:higer}. Finally, we give a remark the Theorem \ref{thm:main} can be extended to the signed graphs in Section \ref{sec:signed}.

\section{Preliminaries}
\label{sec:pre}
In this article, we only consider simple graphs i.e., graphs without self-loops and multi-edges. Additionally, we default that all graphs in this article have no isolated vertices. Let $G=(V,E,w,\mu,\kappa)$ be a finite weighted graph with an edge weight $w:E\to \R_{>0}$, a vertex measure $\mu:V\to\R_{>0}$ and a potential function  $\kappa:V\to\R$. We always assume $V=\{1,2,\ldots,n\}$ to simplify our exposition. We denote $w(\{i,j\})$, $\kappa(i)$ and $\mu(i)$ by $w_{ij}$,  $\kappa_i$ and $\mu_i$ for simplicity.  We sometimes write $i\sim j$ to imply $\{i,j\}\in E$. The weighted degree of a vertex $i$ is defined by $d(i)=\sum_{j\sim i}w_{ij}$. We define $\tau_G:=\max_{i\in V}\frac{d(i)}{\mu_i }$. In some other literature, it is assumed that $\mu_i=d(i)$ for any $i\in V$. In this situation, we have $\tau_G=1$.

 Given a weighted graph $G=(V,E,w,\mu,\kappa)$, its adjacency matrix is an $n\times n$ matrix $A=\{a_{ij}\}_{1\leq i,j\leq n}$ where $a_{ij}=w_{ij}$ if $i\sim j$ and $a_{ij}=0$ if $i\nsim j$.
 The Laplacian of $G$ is defined by $L=D+K-A$ where $D$ and $K$ are both diagonal matrices with $D_{ii}=d(i)$ and $K_{ii}=\kappa_i$. The normalized Laplacian of $G$ is defined by  $\mathfrak{L}=M^{-1}L$ where $M$ is a diagonal matrix with $M_{ii}=\mu_{i}$. 
 
In some other literature, it is assumed the normalized Laplacian is $\mathfrak{L}'=M^{-\frac{1}{2}}LM^{-\frac{1}{2}}$. It is worth noting that the eigenvalues of  $\mathfrak{L}$ and $\mathfrak{L}'$ are the same. Exactly, for any eigenfunction $f$ of $\mathfrak{L}$ corresponding to an eigenvalue $\lambda$, $M^{\frac{1}{2}}f$ is an eigenfunction  $\mathfrak{L}'$ corresponding to $\lambda$.

In this article, we always assume the all eigenvalues $\{\lambda_{k}(G)\}_{k=1}^n$ of $\mathfrak{L}$ are sort in ascending order, i.e., 
$$\lambda_1(G)\leq\lambda_2(G)\cdots\leq\lambda_{n-1}(G)\leq \lambda_n(G). $$

Given a subset $A\subset V$, we denote the complement of $A$ by $\overline{A}:=V\setminus A$. The conductance of $A$ is defined by $$\Phi_{G}(A)=\frac{\sum_{i\in A,j\in \overline{A}}w_{ij}}{\sum_{i\in A}\mu_i}.$$ For $1\leq k \leq n$, the $k$-way Cheeger constant $\rho_k(G)$ is defined as follows, 
$$\rho_k(G)=\min_{(A_1,A_2,\ldots,A_k)\in \mathcal{D}_k}\max_{1\leq i\leq k}\Phi_{G}(A_i),$$
where $\mathcal{D}_k:=\{(A_1,A_2,\ldots,A_k):\emptyset\neq A_i\subset V,A_i\cap A_j= \emptyset\}$.

It is direct to calculate similarly as in \cite[Lemma 7]{GLZ23} to get the monotonicity of the multi-way Cheeger constants.
\begin{pro}\label{pro:mono}
    For any integer $1\leq k\leq n-1$, we have $\rho_k(G)\leq \rho_{k+1}(G)$.
\end{pro}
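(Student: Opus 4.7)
The plan is to proceed by a direct comparison argument: every admissible $(k+1)$-partition produces an admissible $k$-partition by discarding one block, and this operation can only decrease (or leave unchanged) the maximum conductance.

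More precisely, I would first fix a minimizer $(A_1,A_2,\ldots,A_{k+1})\in\mathcal{D}_{k+1}$ of the problem defining $\rho_{k+1}(G)$, i.e.\ a tuple of pairwise disjoint nonempty subsets of $V$ such that
\[
\rho_{k+1}(G)=\max_{1\leq i\leq k+1}\Phi_G(A_i).
\]
Next, I would simply drop the last set (the choice is immaterial) and observe that $(A_1,A_2,\ldots,A_k)$ still lies in $\mathcal{D}_k$ because the sets remain nonempty and pairwise disjoint; the condition $k\leq n-1$ is used only to guarantee that the subfamily $\mathcal{D}_{k+1}$ is itself nonempty so that a minimizer exists. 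Then, since the maximum over a smaller collection is at most the maximum over the whole, we obtain
\[
\rho_k(G)\leq \max_{1\leq i\leq k}\Phi_G(A_i)\leq \max_{1\leq i\leq k+1}\Phi_G(A_i)=\rho_{k+1}(G),
\]
which is the desired inequality.

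There is no real obstacle here: the argument is the standard monotonicity in $k$ that one also sees for the eigenvalues via the min-max principle, and it uses nothing beyond the definition of $\rho_k$ and the existence of a minimizer (guaranteed because $\mathcal{D}_{k+1}$ is a finite set). The only subtlety worth flagging is the degenerate case $k+1>n$, which is excluded by the hypothesis $k\leq n-1$, and the observation that the particular block we discard does not matter since we are already taking a max. This is exactly the same reasoning as in \cite[Lemma 7]{GLZ23}, adapted to the present notation.
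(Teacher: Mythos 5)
Your proof is correct and is exactly the direct argument the paper has in mind (the paper omits the proof, citing \cite[Lemma 7]{GLZ23}): discard one block of an optimal $(k+1)$-tuple in $\mathcal{D}_{k+1}$ to obtain an admissible $k$-tuple whose maximum conductance is no larger. Nothing further is needed.
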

In the proof of the main results, we use the following concepts.
\begin{defn}[Nodal domain]
       Let $G=(V,E)$ be a graph and $f:V\to \mathbb{R}$ be a function on $V$.

\begin{itemize}
  \item [(i)]  A subset $S\subset V$ is a strong nodal domain of $f$ on $G$ if it is a connected component of $G'=(V',E')$ where $V'=\{x\in V:f(x)\neq 0\}$ and $E'=\left\{ \{x,y\}\in E:f(x)f(y)>0 \right\}$. We denote the number of the strong nodal domains of $f$ by $\mathfrak{S}(f)$.
  \item [(ii)]  A subset $W\subset V$ is a weak nodal domain of $f$ on $G$ if it is a connected component of $G''=(V'',E'')$ where $V''=V$ and $E''=\left\{ \{x,y\}\in E:f(x)f(y)\geq 0 \right\}$. We denote the number of the weak nodal domains of $f$ by $\mathfrak{W}(f)$.
\end{itemize}

\end{defn}
 \begin{remark}
     This concept is a discrete version of the continuous situation.  Courant’s nodal domain theorem states that for any $k$-th eigenfunction of a self-adjoint second-order elliptic differential operator on domain $D$, its zeros can not divide the $D$ into more than $k$ different subdomains \cite{Courant23,Courant53}. This type of upper bounds also holds in the graph setting. To be precise, if $f$ is an eigenfunction corresponding to $k$-th eigenvalue $\lambda_k$ of graph Laplacian, we have $\mathfrak{S}(f)\leq k+r-1$ and $\mathfrak{W}(f)\leq k+c-1$, where $r$ is the multiplicity of $\lambda_k$ and $c$ is the number of connected components of the graph \cite{DGLS01,Fiedler75,Fiedler752,Lovasz}.
 \end{remark}
\begin{remark}
    The study of nodal domains in random graphs is also important. In 2010, Dekel, Lee, and Linial first studied the nodal domains in the model of random graphs. They proved that asymptotically almost surely holds for every graph, the two largest weak nodal domains of each eigenfunction nearly cover all vertices of the graph and the two largest strong nodal domains almost cover all vertices of the graph whose are not zeros of this eigenfunction\cite{DLL}. In 2019, Huang and Rudelson proved that the sizes of these two large nodal domains are approximately equal to each other with high probability \cite{HR}. Moreover, the theory of nodal domains in random graphs is closely related to the discrete analog of Berry’s conjecture \cite{GMMS}.
\end{remark}

\section{Higher-order Cheeger inequalities}
\label{sec:main1}
In this section, we prove the following higher-order Cheeger inequalities.
\begin{theorem}\label{thm:main}
    Let $G=(V,E,w,\mu,\kappa)$ be a connected weighted graph with $\kappa\geq 0$. Then we have
     $$\rho_{k-l}(G)\leq \sqrt{2\tau_G\lambda_k(G)},$$ 
 where $\ell=|E|-|V|+1$ is the cyclomatic number of $G$.
\end{theorem}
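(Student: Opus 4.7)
My plan is to combine a genericity reduction via Lemma~\ref{lemma:nozero} with a structural lower bound on strong nodal domains, and then apply a Cheeger-type sweep inside each domain.

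First I would invoke Lemma~\ref{lemma:nozero}: after an arbitrarily small perturbation of $w$ and $\kappa$ (chosen to preserve $\kappa\geq 0$), every eigenvalue of $\mathfrak{L}$ becomes simple and every eigenfunction is nowhere zero. Because $\lambda_k(G)$, $\tau_G$, and each conductance $\Phi_G(A)$ depend continuously on $(w,\kappa)$ while the index set $\mathcal{D}_{k-\ell}$ is combinatorial (hence finite), the theorem for the original weighted graph will follow by taking the perturbation limit. From here on I assume the eigenvalues are simple and the eigenfunctions are pointwise nonzero.

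The structural core of the argument is the lower bound
$$\mathfrak{S}(f_k)\geq k-\ell$$
for an eigenfunction $f_k$ of the simple eigenvalue $\lambda_k(G)$. I would argue by induction on the cyclomatic number $\ell$. When $\ell=0$ the graph is a tree and this is the classical theorem of Fiedler. For $\ell\geq 1$, choose a non-bridge edge $e$ and pass to the connected subgraph $G-e$, whose cyclomatic number is $\ell-1$. Cauchy interlacing applied to the rank-one positive-semidefinite perturbation $L(G)-L(G-e)$ controls how the spectral index of $\lambda_k(G)$ relates to the eigenvalues of $G-e$, while putting the edge $e$ back changes the number of connected components of the sign-consistent subgraph by at most one. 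Combined with the inductive hypothesis applied to $G-e$, this yields the claimed bound. The genericity from the first step is essential here to forbid simultaneous collapse of eigenvalues and vanishing of $f_k$ on a vertex.

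With $m:=\mathfrak{S}(f_k)\geq k-\ell$ strong nodal domains $S_1,\dots,S_m$ in hand, I would run a Dirichlet Cheeger sweep on each. Setting $g_i$ equal to $f_k$ on $S_i$ and vanishing elsewhere, a direct computation using $Lf_k=\lambda_k(G)Mf_k$ and the inequality $f_k(u)f_k(v)\leq 0$ on every boundary edge $uv$ (with $u\in S_i$, $v\notin S_i$; this follows from the strong-nodal-domain definition together with the no-zero property) gives
$$\langle g_i,Lg_i\rangle-\lambda_k(G)\langle g_i,Mg_i\rangle=\sum_{\substack{uv\in E\\ u\in S_i,\,v\notin S_i}}w_{uv}f_k(u)f_k(v)\leq 0.$$
The hypothesis $\kappa\geq 0$ then permits a level-set threshold argument on $g_i^2$ to extract a subset $A_i\subseteq S_i$ with $\Phi_G(A_i)\leq\sqrt{2\tau_G\lambda_k(G)}$. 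Since the $S_i$ are pairwise disjoint, so are the $A_i$, and any $k-\ell$ of them form the desired element of $\mathcal{D}_{k-\ell}$.

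The principal obstacle is the inductive step of the nodal-domain lower bound, where one must simultaneously track how removing a non-bridge edge shifts the spectral index under Cauchy interlacing and how it alters the count of sign-consistent components; without the genericity from Lemma~\ref{lemma:nozero}, spectral multiplicities or vanishing coordinates of $f_k$ would obstruct the clean bookkeeping on which the induction rests.
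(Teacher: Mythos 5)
Your overall architecture coincides with the paper's: perturb via Lemma~\ref{lemma:nozero} to make eigenvalues simple and eigenfunctions nowhere zero, lower-bound the number of strong nodal domains of $f_k$ by $k-\ell$, convert each nodal domain into a low-conductance set by a Dirichlet sweep (this is exactly Lemma~\ref{lemma:main}, together with Proposition~\ref{pro:mono}), and pass to the limit by continuity of $\lambda_k$, $\tau_G$ and $\rho_{k-\ell}$. The sweep step and the limiting step are fine as you describe them.

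The genuine gap is in your inductive proof of the nodal-domain lower bound $\mathfrak{S}(f_k)\geq k-\ell$. You propose to delete a non-bridge edge $e=\{u,v\}$, apply the inductive hypothesis to $G-e$, and control the index shift by Cauchy interlacing for the rank-one perturbation $L(G)-L(G-e)=w_{uv}(e_u-e_v)(e_u-e_v)^T$. But the inductive hypothesis on $G-e$ is a statement about \emph{eigenfunctions of $L(G-e)$}, and the restriction of $f_k$ to $G-e$ is not one: removing the edge changes the operator, so the eigenfunction of $G-e$ whose index interlacing pairs with $\lambda_k(G)$ is an entirely different function, and there is no a priori relation between its sign pattern and that of $f_k$. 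Hence the two ingredients you want to combine --- the interlacing of indices and the statement ``putting $e$ back changes the number of sign-consistent components by at most one'' (which is about the fixed function $f_k$) --- never refer to the same function, and the induction does not close. The known proofs (Berkolaiko \cite{Berkolaiko}, Xu--Yau \cite{XY12}, which the paper simply cites as Theorem~\ref{thm:lower-bound}) repair exactly this point: one cuts the edge \emph{while compensating the potentials at $u$ and $v$} by $w_{uv}\bigl(1-f_k(v)/f_k(u)\bigr)$ and $w_{uv}\bigl(1-f_k(u)/f_k(v)\bigr)$ respectively, so that $f_k$ remains an eigenfunction of the modified operator with the same eigenvalue; the difference of the two operators is then a rank-one matrix whose sign is determined by $\sgn\bigl(f_k(u)f_k(v)\bigr)$, and repeating this for $\ell$ edges reduces to a tree where Fiedler's theorem applies, with the index drifting by at most $\ell$. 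This compensation is the missing idea; without it your inductive step fails. If you instead invoke the cited nodal-count theorem directly, the rest of your argument is correct and is essentially the paper's proof.
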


This theorem implies the result of higher-order Cheeger inequalities on tress which was proved by Miclo \cite{DJM12,Miclo08}.
\begin{cor}\label{cor:main}
    Let $G=(V,E,w,\mu,\kappa)$ be a weighted tree with $\kappa\geq 0$.  Then we have $$\rho_{k}(G)\leq \sqrt{2\tau_G\lambda_k(G)}.$$ 
\end{cor}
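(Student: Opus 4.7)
The plan is to obtain this corollary as an immediate specialization of Theorem \ref{thm:main}. The key observation is purely combinatorial: a tree is by definition a connected acyclic graph, and the standard counting argument shows that every tree on $|V|$ vertices has exactly $|V|-1$ edges. Hence the cyclomatic number evaluates to $\ell = |E| - |V| + 1 = (|V|-1) - |V| + 1 = 0$.

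With this in hand, the hypotheses of Theorem \ref{thm:main} are satisfied verbatim: $G$ is connected (being a tree), and $\kappa\geq 0$ is assumed. Substituting $\ell = 0$ into the conclusion $\rho_{k-\ell}(G)\leq \sqrt{2\tau_G\lambda_k(G)}$ yields $\rho_k(G)\leq \sqrt{2\tau_G\lambda_k(G)}$, which is exactly the claim of the corollary.

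There is no genuine obstacle to overcome here; the entire content of the statement has been absorbed into Theorem \ref{thm:main}, whose proof does the heavy lifting via the nodal-domain counting argument and the generic-simplicity result of Lemma \ref{lemma:nozero}. The only minor point worth flagging is that the shifted index $k-\ell$ in Theorem \ref{thm:main} needs to satisfy $k-\ell\geq 1$ in order to yield a well-defined Cheeger constant; for a tree, this reduces to $k\geq 1$, which is the natural range of $k$, so no restriction is lost in the passage from the theorem to the corollary.
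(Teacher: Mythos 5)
Your proposal is correct and matches the paper's (implicit) argument exactly: the corollary is obtained from Theorem \ref{thm:main} by noting that a tree has cyclomatic number $\ell = |E|-|V|+1 = 0$. Your remark that $k-\ell\ge 1$ reduces to $k\ge 1$ for trees is a sensible sanity check but adds nothing beyond what the paper intends.
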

 Before proving Theorem \ref{thm:main}, we need some preparation. Given any matrix $M$, we denote $M^{(ij)}$ be the matrix obtained by removing the $i$-th row and $j$-th column from $M$.  
\begin{lemma}\label{lemma:det}
    Given an $n\times n$ matrix $M=\{m_{ij}\}_{1\leq i,j\leq n}$, let $$\tilde{M}:=M+\left(
\begin{array}{cccc}
	\xi_{1} & 0 & \cdots &  0 \\
	0 & \xi_{2} &\cdots &  0  \\
	\vdots & \vdots& \ddots & \vdots\\
	0 &  0 &\cdots & \xi_{n} \\
\end{array}
\right),$$ where $\{\xi_{i}\}_{i=1}^n$ are independent continuous random variables. Then we have $$\mathrm{Pr}[\;\mathrm{det}(\tilde{M})\neq 0\;]=1.$$
\end{lemma}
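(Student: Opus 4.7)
The plan is to view $\det(\tilde{M})$ as a polynomial in the random variables $\xi_1,\ldots,\xi_n$ and show that this polynomial is not identically zero, whence its zero set has probability mass zero under the joint distribution of independent continuous random variables.

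First I would observe, via the Leibniz formula, that $\det(\tilde{M})$ is a polynomial in $\xi_1,\ldots,\xi_n$: only the identity permutation contributes the diagonal product $\prod_{i=1}^n (m_{ii}+\xi_i)$, while every other permutation contributes a term in which at least one diagonal factor $(m_{ii}+\xi_i)$ is replaced by an off-diagonal entry $m_{i\pi(i)}$ that does not involve any $\xi_j$. Expanding the identity contribution shows that the monomial $\xi_1\xi_2\cdots\xi_n$ appears with coefficient exactly $1$ and cannot be cancelled by any other permutation. Thus $\det(\tilde{M})$ is a nontrivial polynomial in $(\xi_1,\ldots,\xi_n)$.

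Next I would execute a short induction on $n$ to convert this algebraic fact into the probabilistic statement. The base case $n=1$ is immediate since $\det(\tilde{M})=m_{11}+\xi_1$ vanishes only at the single point $\xi_1=-m_{11}$, which has probability $0$ under the continuous distribution of $\xi_1$. For the inductive step, expanding $\det(\tilde{M})$ along the first row gives
\[
\det(\tilde{M})=(m_{11}+\xi_1)\det(\tilde{M}^{(1,1)})+\sum_{j=2}^{n}(-1)^{1+j}m_{1j}\det(\tilde{M}^{(1,j)}),
\]
where none of the cofactors on the right involves $\xi_1$ (those rows have been removed). The matrix $\tilde{M}^{(1,1)}$ has the same diagonal-perturbation form as $\tilde{M}$ but one dimension smaller, with independent continuous perturbations $\xi_2,\ldots,\xi_n$; the inductive hypothesis yields $\det(\tilde{M}^{(1,1)})\neq 0$ almost surely.

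Finally I would condition on $\xi_2,\ldots,\xi_n$: on the probability-one event that $\det(\tilde{M}^{(1,1)})\neq 0$, the right-hand side above is an affine function of $\xi_1$ with nonzero slope, hence vanishes at a single value of $\xi_1$. Since $\xi_1$ is continuous and independent of $\xi_2,\ldots,\xi_n$, the conditional probability of hitting that specific value is $0$, and the tower property then yields $\mathrm{Pr}[\det(\tilde{M})=0]=0$. The only delicate point is keeping track of conditioning and independence in the inductive step, but since the $\xi_i$ are assumed independent and continuous (no atoms), this causes no real obstruction.
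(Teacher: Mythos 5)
Your proposal is correct and follows essentially the same route as the paper: an induction on $n$, cofactor expansion along the first row so that $\det(\tilde{M})=(m_{11}+\xi_1)\det(\tilde{M}^{(11)})+C$ with $C$ and $\det(\tilde{M}^{(11)})$ independent of $\xi_1$, the inductive hypothesis giving $\det(\tilde{M}^{(11)})\neq 0$ almost surely, and then conditioning on $\xi_2,\ldots,\xi_n$ to conclude that the affine function of $\xi_1$ vanishes with probability zero. Your preliminary Leibniz-formula observation is a harmless (and correct) addition, but the inductive argument already carries the whole proof, exactly as in the paper.
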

\begin{proof}
  Denote the order of the matrix $M$ by $n$. We prove this lemma by induction on $n$. 
  
  When $n=1$, we compute $$\text{Pr}[\; \text{det}(\tilde{M})\neq 0 \;]=\text{Pr}[\; \xi_1\neq -m_{11}\; ]=0.$$
    This shows the lemma is true for $n=1$.

    We assume that this lemma holds for any matrix with an order equal to or less than  $n-1$. Next, we prove this lemma holds for any $n\times n$ matrix $M$. 
    By induction, we have
    \begin{equation*}
        \begin{aligned}
            \text{Pr}[\;\text{det}(\tilde{M})\neq 0\;]&=\text{Pr}[\;\text{det}(\tilde{M})\neq 0,\;\text{det}\left(\tilde{M}^{(11)}\right)\neq 0\;] +\text{Pr}[\;\text{det}(\tilde{M})\neq 0,\;\text{det}\left(\tilde{M}^{(11)}\right)= 0\;]\\
            &=\text{Pr}[\;\text{det}(\tilde{M})\neq 0,\;\text{det}\left(\tilde{M}^{(11)}\right)\neq 0\;].
        \end{aligned}
    \end{equation*}
   Conditioning on $\xi_{i}=s_i$ for $i=2,\ldots,n,$ such that $\mathrm{det}(\tilde{M}^{(11)})\neq 0$. We compute
\begin{equation*}
    \begin{aligned}
       \text{Pr}[\;\text{det}(\tilde{M})\neq 0\;]&=\text{Pr}[\;(\xi_1+m_{11})\text{det}(\tilde{M}^{(11)})+C\neq 0\;]\\
       &=\text{Pr}[\;\xi_1\neq \frac{-C}{\text{det}(\tilde{M}^{(11)})}-m_{11}\;\;]=1,
    \end{aligned}
\end{equation*}
where the constant $C=\sum_{j=2}^{n}(-1)^{j+1}m_{1j}\mathrm{det}\left(\tilde{M}^{(1j)}\right)$. 
 This concludes the proof.
\end{proof}
We use the above lemma to prove the main lemma which is used in the proof of the main result in this section. 

\begin{lemma}\label{lemma:nozero}
    Let $G=(V,E)$ be a graph. Let $\tilde{M}=(\xi_{ij})_{1\leq i,j\leq n}$ be a symmetric random matrix which the upper triangular coefficients $\{\xi_{ij}\}_{1\leq i\leq j\leq n}$ are independent. Additionally, for $1\leq i\leq j\leq n $, we assume $\xi_{ij}$ satisfies the follows condition:
    $$\xi_{ij}:=\begin{cases}
0 &\text{if}\;i\nsim j, \\
\text{a continuous random variable} & \text{if}\; i\sim j \text{ or }i=j.
\end{cases}$$ Then the probability of the event that all eigenvalues of $\tilde{M}$ are simple is one. Moreover, if $G$ is connected, then the probability of the event that all eigenfunctions of $\tilde{M}$ have no zeros is one. 
\end{lemma}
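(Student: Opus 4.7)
The plan is to establish the two claims in order. First, I will prove simplicity of the spectrum directly via a discriminant argument, and then I will prove the nowhere-vanishing of eigenvectors by induction on $n=|V|$, using the simplicity result as a tool.

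For the simplicity assertion, consider the discriminant $\mathrm{disc}(p)$ of the characteristic polynomial $p(\lambda)=\det(\lambda I-\tilde{M})$. It is a polynomial in the entries of $\tilde{M}$, and it is nonzero precisely when $\tilde{M}$ has all simple eigenvalues. To show that $\mathrm{disc}(p)$ is not identically zero on the admissible sparsity pattern, I specialize to $\xi_{ii}=i$ and $\xi_{ij}=0$ for all $i<j$; the resulting diagonal matrix $\mathrm{diag}(1,2,\ldots,n)$ has discriminant $\prod_{i<j}(i-j)^2\neq 0$. A routine induction in the spirit of Lemma~\ref{lemma:det} shows that any nonzero polynomial in independent continuous random variables is almost surely nonzero, which yields simplicity with probability one.

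For the nowhere-vanishing assertion, I induct on $n$. Fix $v_0\in V$; the goal is to show that with probability one, no eigenvector of $\tilde{M}$ vanishes at $v_0$. If $\tilde{M}f=\lambda f$ with $f(v_0)=0$, then writing $g:=f|_{V\setminus\{v_0\}}$, the rows indexed by $i\neq v_0$ yield $\tilde{M}^{(v_0)}g=\lambda g$, while the $v_0$-th row imposes the linear constraint
\[
\sum_{j\sim v_0}\xi_{v_0 j}\,g(j)=0.
\]
Let $C_1,\ldots,C_m$ be the connected components of $G-v_0$, and let $B_i:=\tilde{M}^{(v_0)}|_{C_i}$ be the corresponding diagonal blocks; each $B_i$ is an independent random matrix of the form in the lemma attached to the connected graph $G[C_i]$ on fewer than $n$ vertices. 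The inductive hypothesis together with the simplicity result just proved, applied to each $B_i$, guarantees that almost surely $B_i$ has simple spectrum and nowhere-vanishing eigenvectors. An additional polynomial argument applied to the resultant of the characteristic polynomials of $B_i$ and $B_j$ shows that the spectra of distinct blocks are almost surely disjoint. Consequently $\tilde{M}^{(v_0)}$ has simple spectrum and each eigenvector is supported on exactly one component $C_i$ with no zero entries there. Since $G$ is connected, $N(v_0)\cap C_i\neq\emptyset$ for every $i$, so each eigenvector's restriction to $N(v_0)$ is nonzero; the linear constraint above becomes a nontrivial affine equation in the independent continuous variables $\{\xi_{v_0 j}\}_{j\sim v_0}$, which is satisfied with probability zero conditional on $\tilde{M}^{(v_0)}$. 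A union bound over the finitely many eigenvectors and over $v_0\in V$ finishes the argument.

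The main subtlety is the case where $G-v_0$ is disconnected: a coincidence between eigenvalues of two different blocks would enlarge the corresponding eigenspace of $\tilde{M}^{(v_0)}$, and a suitable linear combination of eigenvectors supported on different components might then satisfy the constraint automatically. Ruling out such coincidences via the resultant-based argument is the only nonroutine step of the induction.
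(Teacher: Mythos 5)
Your proof is correct. The second half of your argument is essentially the paper's: fix a vertex $v_0$, decompose $G-v_0$ into components, use the induction hypothesis to get nowhere\-/vanishing eigenvectors on each block, observe that connectivity places a neighbour of $v_0$ in every component, and then kill the linear constraint $\sum_{j\sim v_0}\xi_{v_0 j}\,g(j)=0$ by conditioning on $\tilde{M}^{(v_0)}$ and using the independence and atomlessness of the $\xi_{v_0 j}$; you even flag the same subtlety (eigenvalue coincidences between blocks of $\tilde{M}^{(v_0)}$) that the paper must rule out. Where you genuinely diverge is in how simplicity and the disjointness of block spectra are obtained. The paper proves the first statement by induction over connected components, reducing cross\-/component eigenvalue collisions to Lemma~\ref{lemma:det} (determinant of a matrix plus a random diagonal is a.s.\ nonzero), and then gets the simplicity of $\tilde{M}^{(nn)}$ for free by applying that first statement to the disconnected graph $G-v_0$. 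You instead run a global genericity argument: the discriminant of the characteristic polynomial (respectively, the resultant of two block characteristic polynomials) is a polynomial in the free entries, not identically zero on the sparsity pattern as witnessed by a diagonal specialization with distinct entries, hence almost surely nonzero because a nonzero polynomial in independent atomless random variables vanishes with probability zero --- a fact whose inductive proof is exactly the pattern of Lemma~\ref{lemma:det}, of which that lemma is the special case $p=\det$. Your route buys a uniform, self\-/contained treatment of the simplicity claim; notably, the paper's reduction to connected components is vacuous when $G$ is already connected (the block $\tilde{M}_1$ is then $\tilde{M}$ itself, so the inductive hypothesis does not apply), whereas your discriminant argument covers that case directly. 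The only obligations your approach adds are checking that the witness point respects the admissible sparsity pattern and that the union bound runs over the finitely many eigendirections of $\tilde{M}^{(v_0)}$ guaranteed by its simple spectrum; you do both. No gaps.
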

\begin{remark}
    This lemma is similar to the main result in \cite{PPP}, but our lemma can admit a permutation on potential functions. This makes our proof more concise.
\end{remark}
\begin{proof}
    We prove this lemma by induction. When $|V|=1$, it is direct to check that this lemma is true. We assume that this lemma is true  for any graph $G_0=(V_0,E_0)$ with $|V_0|\leq n-1$, then we prove this lemma for any graph $G=(V,E)$ with $|V|=n$.

We assume that $G$ has $k$ connected components which are denoted by $G_{l}=(V_l,E_l)$ for $l=1,\ldots,k$. Denote $|V_l|$ by $n_l$. Without loss of generality, we assume that the random matrix $\tilde{M}$ has the following form $$\tilde{M}=\left(
\begin{array}{cccc}
	\tilde{M}_{1} & 0 & \cdots &  0 \\
	0 & \tilde{M}_{2} &\cdots &  0  \\
	\vdots & \vdots& \ddots & \vdots\\
	0 &  0 &\cdots &  \tilde{M}_{k} \\
\end{array}
\right),$$ 
where $\tilde{M}_l$ is the random matrix corresponding to $G_l$. By definition, $\tilde{M}_l$ is an $n_l \times n_l$ random matrix.

 By direct computing, we get 
 \begin{equation*}
        \begin{aligned}
       \text{Pr}[\text{ the spectrum of $\tilde{M}$ is not simple }]
            &\leq \sum_{l=1}^{k}\text{Pr}[\mathbb{A}_l ]+\sum_{i\neq j}\text{Pr}[ \text{ $\tilde{M}_i$ and $\tilde{M}_j$ have same eigenvalues }]\\
            & =\sum_{i\neq j}\text{Pr}[\text{ $\tilde{M}_i$ and $\tilde{M}_j$ have same eigenvalues }]
        \end{aligned}
    \end{equation*}
where $\mathbb{A}_l$ is the event that the spectrum of $\tilde{M}_l$ is not simple and the above equality is by induction.  

We only prove that $ \text{Pr}[\text{$\tilde{M}_1$ and $\tilde{M}_2$ have same eigenvalues}]=0$, since for any $i\neq j$, the proof of $ \text{Pr}[\text{$\tilde{M}_i$ and $\tilde{M}_j$ have same eigenvalues}]=0$ is the same.
Conditioning on all random variables except $\{\xi_{ii}\}_{i=1}^{n_1}$. Assume $\{\lambda_l\}_{l=1}^{n_2}$ are all eigenvalues of $\tilde{M}_2$ with multiplicity.
We compute
\begin{equation*}
    \begin{aligned}
        \text{Pr}[\text{$\tilde{M}_1$ and $\tilde{M}_2$ have same eigenvalues}]&\leq \sum_{l=1}^{n_2}\text{Pr}[ \lambda_l\text{ is an eigenvalue of }\tilde{M}_1]\\  
        &= \sum_{l=1}^{n_2}\text{Pr}[ \mathrm{det}(\lambda_lI-\tilde{M}_1)= 0 ]\\   
        &=0.
    \end{aligned}
\end{equation*}
The last above equality is by Lemma \ref{lemma:det}. This proves the first statement of this lemma. 

Next, we assume $G$ is connected. We have
\begin{equation*}
        \text{Pr}[\text{ there exists an eigenfunction of $\tilde{M}$ has zeros }]\leq \sum_{i=1}^{n}\text{Pr}[\mathbb{B}_i],\\    
\end{equation*}
where $\mathbb{B}_i$ is the event that there exists an eigenfunction of $\tilde{M}$ which is zero on vertex $i$. 
It is sufficient to prove the second statement of this lemma by proving the right-hand side of the above inequality is zero. We only prove that $\text{Pr}[\mathbb{B}_n]=0$, because it is the same to prove 
$\text{Pr}[\mathbb{B}_i]=0$ for any $1\leq i \leq n-1$.  We assume there are $m$ connected components of the graph obtained by removing the vertex $n$ from $G$ and denote them by $\left\{G'_i=(V_l',E_l')\right\}_{l=1}^m$. Without loss of generality, we can write the random matrix $\tilde{M}$ as the following form $$\tilde{M}=\left(
\begin{array}{ccccc}
	\tilde{M}'_{1} & 0 & \cdots &  0  & Y_1 \\
	0 & \tilde{M}'_{2} &\cdots &  0 &  Y_2 \\
	\vdots & \vdots& \ddots & \vdots & \vdots \\
	0 &  0 &\cdots &  \tilde{M}'_{m}  & Y_m\\
         Y_1^T &  Y_2^T &\cdots &  Y_m^T & \xi_{nn} \\
\end{array}
\right),$$ 
where $\tilde{M}'_{l}$ is the random matrix corresponding to $G'_l=(V'_l,E'_l)$. For $l=1,2,\ldots,m$, let $\mathbb{C}_l$ denote the event that there exists an eigenfunction of $\tilde{M}'_l$ has zeros and $\mathbb{D}$ denote the event that the spectrum of $\tilde{M}^{(nn)}$ is simple. We compute
\begin{equation*}
\begin{aligned}
          \text{Pr}[\mathbb{B}_n]&\leq \sum_{l=1}^m\text{Pr}[\mathbb{B}_n\cap\mathbb{C}_l]+\text{Pr}[\mathbb{B}_n\cap\left(\cup_{l}^{m}\mathbb{C}_l \right)^c]=\text{Pr}[\mathbb{B}_n\cap\left(\cup_{l}^{m}\mathbb{C}_l \right)^c]\\
          &=\text{Pr}[\mathbb{B}_n\cap\left(\cup_{l}^{m}\mathbb{C}_l \right)^c\cap\mathbb{D}]+\text{Pr}[\mathbb{B}_n\cap\left(\cup_{l}^{m}\mathbb{C}_l \right)^c\cap\mathbb{D}^c]\\
          &=\text{Pr}[\mathbb{B}_n\cap\left(\cup_{l}^{m}\mathbb{C}_l \right)^c\cap\mathbb{D}],    
\end{aligned}
\end{equation*}
where the above equalities are both by induction.

Assume the neighborhood of $n$ in $G$ are $\{i_1,i_2,\ldots,i_p\}$. Conditioning on all random variables except $\{\xi_{i_1,n},\xi_{i_2,n},\ldots\xi_{i_p,n}\}$ such that the event $\left(\cup_{l}^m\mathbb{C}_l\right)^c\cap\mathbb{D}$ happens, i.e., for any $l$, all eigenfunctions of $\tilde{M}'_l$ have no zeros and the spectrum of $\tilde{M}^{(nn)}$ is simple. Let $g$ be an eigenfunctions of $\Tilde{M}$ corresponding to $\lambda$. If $g$ is zero on vertex $n$, then the vector $\tilde{g}:=\left(g(1),g(2),\ldots,g(n-1)\right)\in \mathbb{R}^{n-1}$ is an eigenfunction of $\tilde{M}^{(nn)}$. By definition, we have $\sum_{j=1}^p\xi_{i_j,n}g(i_j)=0$.
Let $\{h_{l}\}_{l=1}^{n-1}$ be $n-1$ orthonormal eigenfunction of $\tilde{M}^{(nn)}$. By the above fact,  we compute 
 \begin{equation*}
         \text{Pr}\left[\mathbb{B}_n\right]\leq \sum_{l=1}^{n-1}\text{Pr}\left[\sum_{j=1}^p\xi_{i_j,n}h_l(i_j)=0\right].
 \end{equation*}
Since  for any $1\leq l\leq m$, all eigenfunctions of $\tilde{M}'$
 have no zeros. We assume $h_1(i_1)\neq 0$ without loss of generality. Then we have
  \begin{equation*}
    \text{Pr}\left[\sum_{j=1}^p\xi_{i_j,n}h_1(i_j)=0\right]=\text{Pr}\left[\xi_{i_1,n}=\frac{-\sum_{j=2}^p\xi_{i_j,n}h_1(i_j)=0}{h_{1}(i_1)}\right]=0.
 \end{equation*}
For any $2\leq l\leq n-1$, it is the same to prove that   $\text{Pr}\left[\sum_{j=1}^p\xi_{i_j,n}h_l(i_j)=0\right].$ This shows that $$\text{Pr}[\mathbb{B}_n]=0.$$
This concludes the proof.
\end{proof}

The following lemma shows the connection between strong nodal domains of eigenfunctions and the multi-way Cheeger constants. 
\begin{lemma}\label{lemma:main}
    Let $G=(V,E,w,\mu,\kappa)$ be a weighted graph with $\kappa\geq 0$. Let $f$ be an eigenfunction corresponding to the eigenvalue $\lambda_k(G)$. Then we have $$\rho_{m}(G)\leq \sqrt{2\tau_G\lambda_k(G)},$$where $m:=\mathfrak{S}(f)$ is the number of the strong nodal domains of $f$.
\end{lemma}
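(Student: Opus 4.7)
The plan is to produce, from the eigenfunction $f$, an $m$-partition of $V$ in which every block has conductance at most $\sqrt{2\tau_G\lambda_k(G)}$. Let $D_1,D_2,\ldots,D_m$ denote the strong nodal domains of $f$, and for each $i$ define the localized test function $f_i:V\to\mathbb{R}$ by $f_i(v)=f(v)$ if $v\in D_i$ and $f_i(v)=0$ otherwise. By construction the supports $D_1,\ldots,D_m$ are pairwise disjoint and nonempty, which is what ultimately enables an $m$-block partition.

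The core analytic step is the Rayleigh-quotient bound
$$\sum_{\{u,v\}\in E}w_{uv}(f_i(u)-f_i(v))^2 \;\le\; \lambda_k(G)\sum_{v\in V}\mu_v f_i(v)^2.$$
I would derive it by multiplying the pointwise eigenvalue equation $\sum_{v\sim u}w_{uv}(f(u)-f(v))+\kappa_u f(u)=\lambda_k(G)\,\mu_u f(u)$ by $f(u)$ and summing over $u\in D_i$. After symmetrization, the edges with both endpoints inside $D_i$ produce exactly $\sum_{\{u,v\}\subset D_i}w_{uv}(f(u)-f(v))^2$, while a boundary edge $\{u,v\}$ with $u\in D_i$, $v\notin D_i$ contributes $w_{uv}f(u)(f(u)-f(v))$. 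The definition of a strong nodal domain forces $f(v)$ to be either zero or of the opposite sign to $f(u)$, so $f(u)(f(u)-f(v))\ge f(u)^2=(f_i(u)-f_i(v))^2$, matching exactly the contribution of such an edge to the target left-hand side. The assumption $\kappa\ge 0$ then lets us discard the potential term, yielding the claimed inequality.

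To finish, I would apply the classical Cheeger sweeping argument to $|f_i|$, which is a non-negative function supported in $D_i$: sorting the vertices by decreasing $|f_i|$ and cutting at a suitable super-level set $\{|f_i|^2\ge t\}$ produces $S_i\subseteq D_i$ with
$$\Phi_G(S_i)\;\le\;\sqrt{2\tau_G\lambda_k(G)},$$
the factor $\tau_G$ entering the standard Cauchy--Schwarz step when one passes between the $\mu$-weighted Rayleigh quotient and the degree-weighted side that arises from $\sum_{\{u,v\}\in E}w_{uv}(f_i(u)^2+f_i(v)^2)=\sum_v d(v)f_i(v)^2$. Since the $D_i$ are pairwise disjoint so are the $S_i$, so $(S_1,\ldots,S_m)$ is a feasible partition in the definition of $\rho_m(G)$, which gives $\rho_m(G)\le\sqrt{2\tau_G\lambda_k(G)}$. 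The main subtlety lies in the boundary-edge inequality $f(u)(f(u)-f(v))\ge f(u)^2$: it is exactly the point where \emph{strong} nodal domains (as opposed to weak ones) are indispensable, since only then are cross-edges between different domains guaranteed to join values of opposite sign or to a zero.
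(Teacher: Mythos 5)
Your proof is correct and is essentially the argument the paper relies on: the paper omits the proof of this lemma, citing Theorem 5 of Daneshgar--Javadi--Miclo \cite{DJM12}, whose proof is exactly your localization of $f$ to each strong nodal domain, the Rayleigh-quotient bound via the sign condition $f(u)f(v)\le 0$ on cross-edges, and the Cheeger sweep applied to each $|f_i|$. Your handling of the potential term via $\kappa\ge 0$ and of the factor $\tau_G$ in the Cauchy--Schwarz step correctly accounts for the two generalizations the paper highlights over \cite{DJM12}.
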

This lemma is an extension of Theorem 5 in \cite{DJM12} since we allow that the potential function is non-negative and $\tau_G$ does not have to be $1$. The proofs of them are almost the same. So we omit it here.

The above lemma shows that if we want to give a good upper of multi-way Cheeger constants, we can do this by finding an eigenfunction with many strong nodal domains. The following theorem gives a lower bound of the number of the strong nodal domains of eigenfunctions \cite{Berkolaiko,XY12}. Exactly, they proved a better low bound. For the sake of application, we only give the following form.
\begin{theorem}\label{thm:lower-bound}
    Let $G=(V,E,w,\mu,\kappa)$ be a connected weighted graph. The spectrum of its normalized Laplacian $\mathfrak{L}$ is listed as below:
    $$\lambda_1\leq \lambda_2 \leq \cdots\lambda_{k-1}<\lambda_{k}=\lambda_{k+1}=\cdots=\lambda_{k+r-1}< \lambda_{k+r}\leq \cdots \leq \lambda_{n}.$$
    Let $f_k$ be an eigenfunction corresponding to $\lambda_k$ with no zeros. Then the number of the strong nodal domains of $f_k$ is at least $k+r-1-\ell$ where $\ell=|E|-|V|+1$ is the cyclomatic number of $G$.
\end{theorem}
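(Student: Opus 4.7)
The plan is to establish this lower bound via Berkolaiko's \emph{magnetic perturbation} framework, in which the nodal deficiency $k+r-1-\mathfrak{S}(f_k)$ is realized as the Morse index, at the origin, of a real-analytic family of eigenvalues parametrized by a torus of dimension $\ell$. Since the Morse index at a critical point is bounded above by the dimension of the parameter space, this immediately yields the desired inequality.

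First I would reduce to the case of a simple eigenvalue. By Lemma \ref{lemma:nozero}, a generic small perturbation of the edge weights and potential makes the spectrum simple and keeps every eigenfunction nowhere-vanishing. Analytic perturbation theory splits the multiplicity-$r$ eigenvalue $\lambda_k$ into $r$ nearby simple eigenvalues occupying spectral positions $k,\ldots,k+r-1$; by choosing the perturbation direction appropriately, the eigenfunction of the one in position $k+r-1$ can be arranged to converge to our given $f_k$. Because having no zeros is an open condition on an eigenfunction, the strong nodal domain decomposition is preserved for small perturbation, so $\mathfrak{S}$ is the same in the limit. Thus it suffices to prove $\mathfrak{S}(f_k)\geq k-\ell$ in the simple-eigenvalue case.

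Next I would introduce the magnetic Laplacian $\mathfrak{L}^\alpha$ on $\mathbb{C}^V$ attached to an antisymmetric edge phase $\alpha_{uv}=-\alpha_{vu}$,
\[(\mathfrak{L}^\alpha g)(v)=\frac{1}{\mu_v}\!\left(\kappa_v g(v)+\sum_{u\sim v}w_{uv}\bigl(g(v)-e^{\mi\alpha_{uv}}g(u)\bigr)\right),\]
which is Hermitian and whose spectrum is invariant under the gauge transformations $\alpha_{uv}\mapsto\alpha_{uv}+\theta_v-\theta_u$. After gauge fixing, the essential parameters are the magnetic fluxes through the $\ell$ independent cycles of $G$, so the reduced parameter space is the torus $\mathbb{T}^\ell$. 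At $\alpha=0$ one recovers $\mathfrak{L}$, and by simplicity the eigenvalue $\lambda_k(\alpha)$ is real-analytic near the origin. Because $f_k$ may be chosen real, the first-order variation of $\lambda_k$ in each flux direction vanishes, making $\alpha=0$ a critical point of $\lambda_k$ on $\mathbb{T}^\ell$.

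The concluding and crucial step is to identify the Morse index of $\lambda_k$ at $\alpha=0$ with the nodal deficiency $k-\mathfrak{S}(f_k)$; granting this, the bound Morse index $\leq\dim\mathbb{T}^\ell=\ell$ finishes the proof. This identification is the main obstacle. It proceeds by writing the Hessian of $\lambda_k$ through second-order perturbation theory as a quadratic form on the cycle-flux variables, involving the edge currents $w_{uv}\imag(\overline{f_k(v)}f_k(u))$ and the resolvent of $\mathfrak{L}-\lambda_k$ on $f_k^{\perp}$, and then matching the negative-eigenvalue count of this quadratic form to a combinatorial invariant of the bipartite quotient graph obtained by contracting each strong nodal domain of $f_k$ to a single vertex. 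The alternating sign pattern of $f_k$ across edges of this quotient supplies a canonical orientation, and a rank computation on its cycle space pins down the Morse index exactly. All remaining work is algebraic-combinatorial and independent of the specific weights $w$, $\mu$, or $\kappa$.
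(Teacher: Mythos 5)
A point of orientation first: the paper does not prove Theorem \ref{thm:lower-bound} --- it is quoted from Berkolaiko and Xu--Yau, with the remark that those authors prove something stronger. So you are reproving a cited result, and the magnetic-perturbation route you chose is indeed one of the known proofs of it. As written, however, your proposal has two genuine gaps, one of which is an outright error. The central step --- ``identify the Morse index of $\lambda_k(\alpha)$ at $\alpha=0$ with the nodal deficiency $k-\mathfrak{S}(f_k)$'' --- is false as stated. Take $G=C_4$ with unit weights and $\mu=d$: then $\lambda_4=2$ is simple with eigenfunction $(1,-1,1,-1)$, so $\mathfrak{S}(f_4)=4$ and $k-\mathfrak{S}(f_4)=0$; but the relevant magnetic eigenvalue is $1+\cos(\alpha/4)$, which has a strict maximum at $\alpha=0$, so the Morse index is $1$. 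The correct identity (Berkolaiko, Colin de Verdi\`ere, Berkolaiko--Weyand) equates the Morse index with the \emph{sign-change surplus} $\phi_k-(k-1)$, where $\phi_k$ is the number of edges on which $f_k$ changes sign; the nodal-domain bound then follows from the Euler count $\mathfrak{S}(f_k)=|V|-(|E|-\phi_k)+\ell'\ge \phi_k+1-\ell\ge k-\ell$. More importantly, this identification is the entire content of the theorem, and you explicitly defer it (``this identification is the main obstacle \dots all remaining work is algebraic-combinatorial''). A proof whose only nontrivial step is outsourced to a sketch is not yet a proof; you would either have to carry out the second-order perturbation computation and the rank argument in full, or simply cite Berkolaiko/Colin de Verdi\`ere, at which point the whole construction is unnecessary.

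The reduction from multiplicity $r$ to the simple case also has an unjustified step. You need the \emph{given} $f_k$ to arise as the limit of the eigenfunction occupying spectral position $k+r-1$ after the cluster splits; by first-order degenerate perturbation theory this requires an admissible perturbation $P$ (edge weights and potentials) whose compression to the $\lambda_k$-eigenspace has $M^{1/2}f_k$ as a \emph{simple top} eigenvector. This is not automatic. For potential perturbations alone it can fail: in the $\lambda=1$ eigenspace of $C_4$, spanned by $(1,0,-1,0)$ and $(0,1,0,-1)$, every diagonal compression is diagonal in that basis (the two spanning vectors have disjoint supports), so the nowhere-vanishing eigenfunction $(1,1,-1,-1)$ can never be selected as a top eigenvector by a potential perturbation. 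Edge-weight perturbations do rescue this particular example, but you give no argument that the combined class always suffices, and Lemma \ref{lemma:nozero} is of no help here: it yields genericity of simplicity and non-vanishing, but says nothing about \emph{which} vector of the unperturbed eigenspace the split eigenfunctions converge to, let alone its spectral position within the cluster. Note that the paper's own application of Theorem \ref{thm:lower-bound} in the proof of Theorem \ref{thm:main} sidesteps this issue entirely by perturbing first and only ever invoking the bound with $r=1$.
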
 
Now, we prove the main result of the section.
\begin{proof}[Proof of Theorem \ref{thm:main}]
   We take a sequence of edge weights $\{w^{(m)}\}_{m=1}^{\infty}$ and non-negative potential functions $\{\kappa^{(m)}\}_{m=1}^{\infty}$ such that $w^{(m)}\rightarrow w$ and $\kappa^{(m)}\rightarrow \kappa$ as $m$ tends to infinity. For any $m$, we  define a weighted graph $G^{(m)}:=(V,E,w^{(m)},\mu,\kappa^{(m)})$. By Lemma \ref{lemma:nozero}, we can assume that all eigenfunctions of the normalized Laplacian $\mathfrak{L}^{(m)}$ of the graph $G^{(m)}$ have no zeros.  Denote the all eigenvalues of $\mathfrak{L}^{(m)}$ by $\{\lambda_{i}^{(m)}\}_{i=1}^n$.
    
   Fixed any $k$, let $f^{(m)}_k$ be an eigenfunction of $\mathfrak{L}^{(m)}$ corresponding to $\lambda^{(m)}_k$. By Theorem \ref{thm:lower-bound}, we have the number of the strong nodal domains of $f^{(m)}_k$ is at least $k-\ell$, where $\ell=|E|-|V|+1$.  
    By Lemma \ref{lemma:main}, we have 
    $$\rho_{k-\ell}(G^{(m)})\leq \rho_{\mathfrak{S}(f^{(m)}_k)}(G^{(m)})\leq \sqrt{2\tau_{G^{(m)}}\lambda_k^{(m)}},$$
    where the above first inequality is by Proposition \ref{pro:mono}. By definition of the multi-way Cheeger constant and the continuity of eigenvalues, we have $$\rho_{k-\ell}\left(G^{(m)}\right)\rightarrow \rho_{k-\ell}(G),\quad \tau_{G^{(m)}}\rightarrow \tau_G\quad \text{\quad and \quad } \lambda^{(m)}_{k}\rightarrow \lambda_k(G)$$ as $m$ tends to infinity. 
    This proves the inequality $$\rho_{k-\ell}(G)\leq \sqrt{2\tau_G\lambda_{k}(G)}.$$

\end{proof}
At the end of this section, we give a sequence of weighted graphs to show the advantage of our Theorem \ref{cor:main}. In the following examples, we assume all the potential functions are zero and $\mu_i=\sum_{j\sim i}w_{ij}$. So we omit them in the definition of the sequence of weighted graphs.  Moreover, the condition on vertex measure $\mu$ implies every set $A$ of any weighted graph must have $\Phi_G(A)\leq 1$ by definition. So there is a very trivial bound that $\rho_k(G)\leq 1$ for any $k$.
\begin{example}\label{ex:1}
    For $n\geq 3$, let $G_n=(V_n,E_n,w_n)$ be a sequence of graphs with 
    \begin{equation*}
\left\{
    \begin{aligned}
        &V_n=\{1,2,\ldots,2n+2\},\\
         & E_n=\big\{\{i,i+1\}:\text{ $i=1,2,\ldots,2n$}
  \big\}\cup\big\{\{1,2n+1\},\{n+1,2n+2\}\big\},\\
        &  w_n(i,i+1)=a^{i-1}, \quad\quad  \text{for $1\leq i \leq n$},\\
         &  w_n(i,i+1)=a^{2n-i}, \quad\quad  \text{for $n+1\leq i \leq 2n$},\\
        &  w_n(1,2n+1)=1, \quad\quad w_n(n+1,2n+2)=1.
    \end{aligned}
    \right.
\end{equation*}
where $a=1.1$.

We draw $G_3$ in Figure \ref{fig:3}. The first one is to show the vertices label and the second one is to show the edge weight.
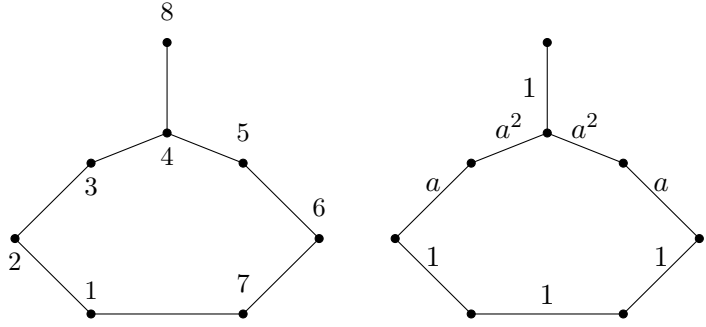
\begin{figure}[!htp]
	\centering
	\tikzset{vertex/.style={circle, draw, fill=black!20, inner sep=0pt, minimum width=3pt}}
	\begin{tikzpicture}[scale=1.0]
 	 \draw (-1,0) -- (-2,1) node[midway, above, black]{$ $}
		-- (-1,2) node[midway, above, black]{$ $}	-- (0,2.4) node[midway, above, black]{$ $}
  	-- (1,2) node[midway, above, black]{$ $}
   	-- (2,1) node[midway, above, black]{$ $}
    	-- (1,0) node[midway, above, black]{$ $}
     	-- (-1,0) node[midway, above, black]{$ $};
		\draw (0,2.4) -- (0,3.6) node[midway,left, black]{$ $} ;

		\node at (-1,0) [vertex, label={[label distance=0mm]90: \small $1$}, fill=black] {};
           \node at (-2,1) [vertex, label={[label distance=0mm]270: \small $2$} ,fill=black] {};
		\node at (-1,2) [vertex, label={[label distance=0mm]270: \small $3$} ,fill=black] {};
		\node at (0,2.4) [vertex, label={[label distance=0mm]270: \small $4$} ,fill=black] {};
	\node at (1,2) [vertex, label={[label distance=1mm]90: \small $5$} ,fill=black] {};
 \node at (2,1) [vertex, label={[label distance=1mm]90: \small $6$} ,fill=black] {};
  \node at (1,0) [vertex, label={[label distance=1mm]90: \small $7$} ,fill=black] {};
   \node at (0,3.6) [vertex, label={[label distance=1mm]90: \small $8$} ,fill=black] {};

	 \draw (4,0) -- (3,1) node[midway, above, black]{$1$}
		-- (4,2) node[midway, above, black]{$a$}	-- (5,2.4) node[midway, above, black]{$a^2$}
  	-- (6,2) node[midway, above, black]{$a^2$}
   	-- (7,1) node[midway, above, black]{$a$}
    	-- (6,0) node[midway, above, black]{$1$}
     	-- (4,0) node[midway, above, black]{$1$};
		\draw (5,2.4) -- (5,3.6) node[midway,left, black]{$1$} ;

		\node at (4,0) [vertex, label={[label distance=0mm]90: \small $ $}, fill=black] {};
           \node at (3,1) [vertex, label={[label distance=0mm]270: \small $ $} ,fill=black] {};
		\node at (4,2) [vertex, label={[label distance=0mm]270: \small $ $} ,fill=black] {};
		\node at (5,2.4) [vertex, label={[label distance=0mm]270: \small $ $} ,fill=black] {};
	\node at (6,2) [vertex, label={[label distance=1mm]90: \small $ $} ,fill=black] {};
 \node at (7,1) [vertex, label={[label distance=1mm]90: \small $ $} ,fill=black] {};
  \node at (6,0) [vertex, label={[label distance=1mm]90: \small $ $} ,fill=black] {};
   \node at (5,3.6) [vertex, label={[label distance=1mm]90: \small $ $} ,fill=black] {};

	\end{tikzpicture}
	\caption{$G_3=(V_3,E_3,w_3,\mu_3)$.}
	
	\label{fig:3}
\end{figure}

After some routine calculations, we get $\rho_2(G_n)\geq \theta$ for any $n\geq 3$, where $\theta$ is a constant which is smaller than $0.1$. By Cheeger inequality, we have$$\lambda_k(G_n)\geq \lambda_2(G_n)\geq \frac{\theta^2}{2}.$$ When $k$ is big enough, the bounds $\sqrt{C_0\lambda_k(G_n)\log k}$ and $Ck^2\sqrt{\lambda_k(G_n)}$ are both bigger than $1$. Then the previous results will give upper bounds weaker than the trivial bound. But the bound $\sqrt{2\tau_G\lambda_k(G_n)}$ in Theorem \ref{thm:main} gives a non-trivial bound if $\lambda_k(G_n)\leq\frac{1}{2}$, where the existence of this eigenvalue for any $k$ is by taking $n$ is big enough.
\end{example}

\section{Higher-order Cheeger inequalities for special graphs}
\label{sec:main3}
We only consider the vertex measure $\mu \equiv 1$ in this section. Hence, for any weighted graph, we write it as $G=(V,E,w,\kappa)$ for simplicity. In this situation, the normalized Laplacian is equal to the Laplacian.

The product of two weighted graphs  $G_1=(V^{(1)},E^{(1)},w^{(1)},\kappa^{(1)}) \text{ and } G_2=(V^{(2)},E^{(2)},w^{(2)},\kappa^{(2)})$ is the weighted graph $G=(V,E,w,\kappa)$ which is defined as follows:
\begin{equation*}
\left\{
    \begin{aligned}
        &V:=V^{(1)}\times V^{(2)},\\
         & (x,y)\sim (x',y') \text{ if and only if }\text{$x=x'$ and $y\sim y'$ or $x\sim x'$ and $y= y'$},\\
        &  w_{(x,y),(x,y')}:=w^{(2)}_{yy'}, \quad\quad  w_{(x,y),(x',y)}:=w^{(1)}_{xx'},\\
        &  \kappa_{(x,y)}:=\kappa^{(1)}_x+\kappa^{(2)}_y.
    \end{aligned}
    \right.
\end{equation*}

\begin{theorem}\label{thm:main3}

    Let $G_1=(V^{(1)},E^{(1)},w^{(1)},\kappa^{(1)}) $  be a tree and $G_2=(V^{(2)},E^{(2)},w^{(2)},\kappa^{(2)})$ be a bipartite graph with $\kappa^{(1)}\geq 0$ and $\kappa^{(2)}\geq 0$. Denote $|V_1|=n_1$ and $|V_2|=n_2$. Define $G=(V,E,w,\kappa)$ be the product graph of $G_1$ and $G_2$.  Assume $\{\lambda_{i}\}_{i=1}^{n_1n_2}$, $\{\lambda^{(1)}_{i}\}_{i=1}^{n_1}$ and $\{\lambda^{(2)}_{i}\}_{i=1}^{n_2}$ are eigenvalues of Laplacian of $G$,  $G_1$ and $G_2$ in ascending order, respectively. If $\lambda^{(2)}_{n_2}<\lambda^{(1)}_{k+1}-\lambda^{(1)}_{k}$, we have $$\rho_{kn_2}(G)\leq \sqrt{2\tau_{G}\lambda_{kn_2}}.$$
\end{theorem}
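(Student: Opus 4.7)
The plan is to produce an eigenfunction of $L_G$ at the eigenvalue $\lambda_{kn_2}(G)$ that already possesses at least $kn_2$ strong nodal domains; Lemma \ref{lemma:main} together with the monotonicity in Proposition \ref{pro:mono} will then finish the proof. Because $L_G=L_{G_1}\otimes I_{n_2}+I_{n_1}\otimes L_{G_2}$, the spectrum of $L_G$ consists of the sums $\lambda_i^{(1)}+\lambda_j^{(2)}$ with tensor-product eigenfunctions $f_i\otimes g_j$. The gap hypothesis $\lambda_{n_2}^{(2)}<\lambda_{k+1}^{(1)}-\lambda_k^{(1)}$ gives, for any $i\le k$ and $j\le n_2$,
\[
\lambda_i^{(1)}+\lambda_j^{(2)}\le \lambda_k^{(1)}+\lambda_{n_2}^{(2)}<\lambda_{k+1}^{(1)}\le \lambda_{k+1}^{(1)}+\lambda_1^{(2)},
\]
so the $kn_2$ smallest eigenvalues of $L_G$ are precisely the pairs with $i\le k$. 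In particular $\lambda_{kn_2}(G)=\lambda_k^{(1)}+\lambda_{n_2}^{(2)}$, and the natural candidate is $h:=f_k\otimes g_{n_2}$, where $f_k$ and $g_{n_2}$ are top eigenfunctions of $L_{G_1}$ at $\lambda_k^{(1)}$ and of $L_{G_2}$ at $\lambda_{n_2}^{(2)}$.

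Next I would establish the multiplicativity $\mathfrak{S}(f\otimes g)=\mathfrak{S}(f)\,\mathfrak{S}(g)$ whenever $f$ and $g$ are zero-free. An edge of the strong nodal graph of $h$ in $G$ is either a horizontal edge $\{(x,y),(x,y')\}$ requiring $g(y)g(y')>0$, or a vertical edge $\{(x,y),(x',y)\}$ requiring $f(x)f(x')>0$. A sign-preserving walk in $G$ therefore projects to sign-preserving walks on $G_1$ and on $G_2$ separately, so the strong nodal graph of $h$ is the Cartesian product of the strong nodal graphs of $f$ and $g$; its connected components are exactly the products of a strong nodal domain of $f$ with a strong nodal domain of $g$, and the multiplicative count follows.

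Then I would bound the two factors. For $f_k$: after perturbing the weights and potential on the tree $G_1$ via Lemma \ref{lemma:nozero} to obtain a simple spectrum and nowhere-vanishing eigenfunctions, Theorem \ref{thm:lower-bound} together with $\ell(G_1)=0$ forces $\mathfrak{S}(f_k)\ge k$. For $g_{n_2}$: the key observation is that, for a connected bipartite graph, a top eigenfunction of $L_{G_2}=D+K-W$ is strictly alternating across the bipartition. Writing $S=\diag(\pm 1)$ with signs chosen according to the bipartition of $G_2$, one has $SWS=-W$, $SDS=D$, $SKS=K$, hence $SL_{G_2}S=D+K+W$; the right-hand side is a nonnegative irreducible matrix whose Perron eigenvector $v>0$ realizes its top eigenvalue, so $Sv$ is a top eigenfunction of $L_{G_2}$ that is positive on one side of the bipartition and negative on the other. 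Every edge of $G_2$ joins opposite sides, so no edge of $G_2$ contributes to the strong nodal graph of $g_{n_2}$, which yields $\mathfrak{S}(g_{n_2})=n_2$.

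Combining, $\mathfrak{S}(h)\ge kn_2$, so Lemma \ref{lemma:main} and Proposition \ref{pro:mono} deliver
\[
\rho_{kn_2}(G)\le \rho_{\mathfrak{S}(h)}(G)\le \sqrt{2\tau_G\,\lambda_{kn_2}(G)},
\]
while the perturbation-and-limit procedure from the end of the proof of Theorem \ref{thm:main} removes the non-degeneracy assumptions on $f_k$ and $g_{n_2}$. The main obstacle is the bipartite step: I must exploit bipartiteness to force the top eigenfunction to alternate globally, which rests on Perron--Frobenius applied to the ``signless'' Laplacian and genuinely requires $G_2$ to be connected, since otherwise $g_{n_2}$ is supported on a single component and carries many zeros, breaking the nodal multiplicativity. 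I would therefore assume (or argue is implicit in the statement) that $G_2$ is connected, and if not, handle each bipartite component of $G_2$ individually with a corresponding reindexing of $kn_2$.
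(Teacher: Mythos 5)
Your proposal is correct and follows essentially the same route as the paper: tensor eigenfunction $f_k\otimes g_{n_2}$, the gap condition to identify $\lambda_{kn_2}(G)=\lambda_k^{(1)}+\lambda_{n_2}^{(2)}$, multiplicativity of strong nodal domains, Lemma \ref{lemma:nozero} plus Theorem \ref{thm:lower-bound} on the tree factor, and a limiting argument. The only deviation is that you prove $\mathfrak{S}(g_{n_2})=n_2$ directly via Perron--Frobenius on the signless Laplacian, where the paper cites Theorem 4.1 of \cite{BLS05}; your observation that this step (and hence the theorem as stated) implicitly requires $G_2$ to be connected is a fair and worthwhile point that the paper leaves unaddressed.
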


Before proving this theorem, we give some analysis of the strong nodal domains.
\begin{lemma}\label{lemma:nodal count}
    Given two functions $f:V^{(1)}\to \mathbb{R}$ and $g:V^{(2)}\to \mathbb{R}$, we define $h:V\to \mathbb{R}$ as follows $$h\left( (x,y)\right)=f(x)g(y),\quad\quad \text{for any }(x,y)\in V:=V^{(1)}\times V^{(2)}. $$ Then we have $$\mathfrak{S}(h)=\mathfrak{S}(f)\mathfrak{S}(g).$$
\end{lemma}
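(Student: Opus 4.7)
The plan is to show that the strong nodal domain partition of $h$ on the product graph $G$ is exactly the ``product'' of the nodal domain partitions of $f$ on $G_1$ and $g$ on $G_2$, from which the count is immediate.

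First I would unpack the definitions in the product graph. Since $h(x,y)=f(x)g(y)$, we have $h(x,y)\neq 0$ if and only if $f(x)\neq 0$ and $g(y)\neq 0$; hence the vertex set of the subgraph $G'_h$ used to define strong nodal domains of $h$ equals $V'_f\times V'_g$, the product of the analogous vertex sets for $f$ and $g$. Next I would classify the edges kept by $G'_h$. An edge of $G$ has one of two forms: either $(x,y)\sim(x,y')$ with $y\sim y'$ in $G_2$, in which case $h(x,y)h(x,y')=f(x)^{2}g(y)g(y')$, so the edge survives iff $f(x)\neq 0$ and $g(y)g(y')>0$; or $(x,y)\sim(x',y)$ with $x\sim x'$ in $G_1$, in which case the edge survives iff $g(y)\neq 0$ and $f(x)f(x')>0$. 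Thus the ``horizontal'' edges of $G'_h$ above a fixed $x\in V'_f$ are exactly the edges of $G'_g$, and ``vertical'' edges at a fixed $y\in V'_g$ are exactly the edges of $G'_f$.

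Then I would prove the product decomposition. Let $A$ be a strong nodal domain of $f$ and $B$ a strong nodal domain of $g$. The set $A\times B$ sits inside $V'_h$. To see it is connected in $G'_h$, pick $(x,y),(x',y')\in A\times B$, choose a sign-consistent path from $x$ to $x'$ inside $A$ and one from $y$ to $y'$ inside $B$, and concatenate: move vertically at fixed first coordinate using the $B$-path, then horizontally at fixed second coordinate using the $A$-path. Each edge used is of one of the two kinds above and satisfies the positivity condition, so the concatenation is a path in $G'_h$. Conversely, if $(x,y)$ and $(x',y')$ lie in the same connected component of $G'_h$, projecting a connecting path to the first coordinate and collapsing repeats yields a walk in $G'_f$ from $x$ to $x'$, so $x,x'$ belong to the same strong nodal domain of $f$; symmetrically for the second coordinate. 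Hence each connected component of $G'_h$ coincides with a set of the form $A\times B$.

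Finally, the decomposition gives a bijection between strong nodal domains of $h$ and pairs consisting of a strong nodal domain of $f$ and one of $g$, which yields $\mathfrak{S}(h)=\mathfrak{S}(f)\mathfrak{S}(g)$. The only genuinely delicate point is the ``converse'' direction, where one must verify that projecting a positive-agreement walk in $G'_h$ really lands in $G'_f$ (respectively $G'_g$); this is exactly where the bipartite-style split of edges into horizontal and vertical types, together with the nonvanishing of $f(x)$ or $g(y)$ forced by the surviving edge condition, does the work. No other steps should require more than direct checking.
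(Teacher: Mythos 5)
Your proposal is correct and follows essentially the same route as the paper: identify the strong nodal domains of $h$ as exactly the products $A\times B$ of strong nodal domains of $f$ and $g$, proving connectedness of each $A\times B$ by concatenating a vertical and a horizontal sign-consistent path. The only cosmetic difference is in the separation step, where you project an entire connecting path onto each factor while the paper checks a single crossing edge between two blocks; both verifications are equivalent and equally routine.
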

\begin{proof}
   Assume $\mathfrak{S}(f)=k_1$ and $\mathfrak{S}(g)=k_2$. Let $\{S_i\}_{i=1}^{k_1}$ and $\{W_j\}_{j=1}^{k_2}$ be the strong nodal domains of $f$ and $g$, respectively. We prove that the strong nodal domains of  $h$ are exactly $$\{S_i\times W_j\}_{i=1,\ldots,k_1,j=1,\ldots,k_2}.$$
   
  Fixed $i$ and $j$, without loss of generality, we assume $f$ is positive on $S_i$ and $g$ is positive on $W_j$. Then we have  $h$ is positive on $S_i\times W_j$. For any $(x,y)$ and $(x',y')$ in $S_i\times W_j$, by definition of the strong nodal domains, there exist two sequences of vertices $\{x_{l}\}_{l=1}^{m_1}\subset S_i$ and $\{y_{l}\}_{l=1}^{m_2}\subset W_j$ such that  $x\sim x_1 \sim x_2 \sim \cdots \sim x_{m_1}\sim x'$ and $y\sim y_1 \sim y_2 \sim \cdots \sim y_{m_2}\sim y'$. Then we have 
  $$(x,y)\sim (x_1,y) \sim (x_2,y) \sim \cdots \sim (x_{m_1},y)\sim (x',y)\sim (x',y_1) \sim (x',y_2) \sim \cdots \sim (x',y_{m_2})\sim (x',y').$$ This shows that $S_i\times W_j$ belongs to one strong nodal domain of $h$.

  If there exist $(x,y)\in S_{i_1}\times W_{j_1} $ and $(x',y')\in S_{i_2}\times W_{j_2} $ such that $(x,y)\sim (x',y')$. Without loss of generality, we assume $x=x'$, $i_1=i_2$ and $y\sim y'$. If $h\left((x,y)\right)h\left((x',y')\right)>0$, then $g(y)g(y')>0$. By the fact that $y\sim y'$, we have $W_{j_1}=W_{j_2}$. This implies that $ S_{i_1}\times W_{j_1} $ and $ S_{i_2}\times W_{j_2} $ belongs to different strong nodal domains if $i_1\neq i_2$ or  $j_1\neq j_2$.
  
  Since the set consists of nonzero vertices of $h$ is $\cup_{i=1}^{k_1}\cup_{j=1}^{k_2}S_i\times W_j$, then the all strong nodal domains of $h$ are  $\{S_i\times W_j\}_{i=1,\ldots,k_1,j=1,\ldots,k_2}.$
\end{proof}

\begin{proof}[Proof of Theorem \ref{thm:main3}]
    By Lemma \ref{lemma:nozero}, there exist a sequence on edge weights $\{w^{(1,l)}\}_{l=1}^{\infty}$ of $E^{(1)}$ and a sequence potential functions $\{\kappa^{(1,l)}\}_{l=1}^{\infty}$ on $V^{(1)}$ such that $$\lim_{l\to \infty}w^{(1,l)}=w^{(1)}\text{ and } \lim_{l\to \infty}\kappa^{(1,l)}=\kappa^{(1)},$$
    with all eigenfunctions of the Laplacian $L^{(1,l)}$ of $G^{(1,l)}:=(V,E,w^{(1,l)},\kappa^{(1,l)})$ have no zeros for any $l$. Let $\{\lambda_i^{(1,l)}\}_{i=1}^{n_1}$ be the all eigenvalues of $L^{(1,l)}$ in ascending order. Moreover, we can assume that $\lambda^{(2)}_{n_2}<\lambda^{(1,l)}_{k+1}-\lambda^{(1,l)}_{k}$ by the continuity of eigenvalues. Let $f^{(l)}:V^{(1)}\to \mathbb{R}$  $\left(\text{resp., }g:V^{(2)}\to \mathbb{R}\right)$ be an eigenfunction of Laplacian of $G^{(1,l)}$ $\left(\text{resp., }G^{(2)}\right)$ corresponding to $\lambda_k^{(1,l)}$ $\left(\text{resp., } \lambda_{n_2}^{(2)}\right)$. By  Theorem \ref{thm:lower-bound}, we have $\mathfrak{S}(f^{(l)})\geq k$. By Theorem 4.1 in \cite{BLS05}, we have $\mathfrak{S}(g)=n_2$.
    Define $h^{(l)}:V\to \mathbb{R}$ as follows $$h^{(l)}\left( (x,y)\right)=f^{(l)}(x)g(y),\quad\quad \text{for any }(x,y)\in V=V^{(1)}\times V^{(2)}. $$ Let $G^{(0,l)}=(V,E,w^{(0,l)},\kappa^{(0,l)})$ be the product of $G^{(1,l)}$ and $G^{(2)}$.  By direct compute, we have $h^{(l)}$ is an eigenfunction of the Laplacian of $G^{(0,l)}$ corresponding to $\lambda^{(1,l)}_k+\lambda^{(2)}_{n_2}$. Because $\lambda^{(2)}_{n_2}<\lambda^{(1,l)}_{k+1}-\lambda^{(1,l)}_{k}$, we have $\lambda_{kn_2}^{(0,l)}=\lambda^{(1,l)}_k+\lambda^{(2)}_{n_2}$, where $\{\lambda^{(0,l)}_{i}\}_{i=1}^{n_1n_2}$ are the all eigenvalues of the Laplacian of $G^{(0,l)}$ in ascending order. We use Lemma \ref{lemma:main} to get  $$\rho_{kn_2}\left(G^{(0,l)}\right)\leq  \rho_{\mathfrak{S}\left(f^{(l)}\right)\mathfrak{S}(g)}\left(G^{(0,l)}\right)=\rho_{\mathfrak{S}\left(h^{(l)}\right)}\left(G^{(0,l)}\right)\leq \sqrt{2\tau_{G^{(0,l)}}\lambda_{kn_2}^{(0,l)}},$$ 
where the above first inequality is by Proposition \ref{pro:mono} and the above first equality is by Lemma \ref{lemma:nodal count}. We compute $$\lim_{l\to \infty}\lambda_{kn_2}^{(0,l)}=\lambda_{kn_2},\quad\quad\lim_{l\to \infty}\tau_{G^{(0,l)}}=\tau_{G}\quad\text{ and }\quad\lim_{l\to \infty}\rho_{kn_2}\left(G^{(0,l)}\right)=\rho_{kn_2}\left(G\right).$$
This concludes the proof.
\end{proof}

\section{Lower bounds for multi-way Cheeger constants }
\label{sec:main4}

In this section, we assume $\kappa\equiv0$ and $|V|\geq 3$. We denote the weighted graph $G=(V,E,w,\mu,\kappa)$ by $G=(V,E,w,\mu)$ for short. 

Define the normalized adjacency matrix of $G=(V,E,w,\mu)$ by $\mathcal{A}:=M^{-1}A$, where $A$ is the adjacency matrix of graph $G$ and $M$ is the diagonal matrix with $M_{ii}=\mu_i$. We assume that all eigenvalues of $\mathcal{A}$ are listed below 
$$\eta_n\leq\eta_{n-1}\leq \cdots\leq \eta_{2}\leq \eta_1.$$ Let $\eta=\max\{|\eta_2|,|\eta_{n}|\}$.

\begin{theorem}\label{thm:main4}
       Let $G=(V,E,w,\mu)$ be a weighted graph.  Then we have $$\rho_k(G)\geq (\tau_{min}-\eta)(1-\frac{1}{k}),$$
       where $\tau_{min}=\min_{i\in V}\frac{d(i)}{\mu_i }$.
\end{theorem}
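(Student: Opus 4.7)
The plan is a Rayleigh-quotient/trace argument on the subspace spanned by the indicator functions of an arbitrary $k$-partition. The relevant underlying facts are that $\mathcal{A}=M^{-1}A$ is self-adjoint with respect to the $\mu$-weighted inner product $\langle x,y\rangle_\mu=\sum_{v}\mu_v x(v)y(v)$ (since $\langle \mathcal{A}x,y\rangle_\mu=\sum_{u,v}w_{uv}x(u)y(v)$ is symmetric in $x,y$), that $\mathfrak{L}=M^{-1}D-\mathcal{A}$ when $\kappa\equiv 0$, and that $\mathfrak{L}$ is positive semi-definite on $L^2(V,\mu)$.

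The first step is a Rayleigh-quotient bound. Let $\phi_1\in\R^V$ be a top eigenvector of $\mathcal{A}$, unit in $\|\cdot\|_\mu$. For any $u\perp_\mu\phi_1$ we have $\langle \mathcal{A}u,u\rangle_\mu\leq \eta_2\|u\|_\mu^2\leq \eta\|u\|_\mu^2$ by the spectral theorem on $\phi_1^\perp$, while $\langle M^{-1}Du,u\rangle_\mu=\sum_v d(v)u(v)^2\geq \tau_{min}\|u\|_\mu^2$ using the pointwise bound $d(v)/\mu_v\geq \tau_{min}$. Subtracting,
\[
\langle \mathfrak{L}u,u\rangle_\mu\;\geq\;(\tau_{min}-\eta)\|u\|_\mu^2\qquad\text{for all }u\perp_\mu\phi_1.
\]
If $\tau_{min}\leq \eta$ the statement is vacuous ($\rho_k\geq 0$), so I may assume $\tau_{min}>\eta$.

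Next, fix a partition $(A_1,\ldots,A_k)\in\mathcal{D}_k$ and set $f_i=\mathbf{1}_{A_i}$. The family $\{f_i/\sqrt{\sum_{v\in A_i}\mu_v}\}_{i=1}^{k}$ is $L^2(\mu)$-orthonormal (disjoint supports) and spans a $k$-dimensional subspace $W$. A direct computation gives $\langle \mathfrak{L}f_i,f_i\rangle_\mu=\sum_{u\in A_i,\,v\notin A_i}w_{uv}$, whence
\[
\mathrm{tr}(\mathfrak{L}|_W)=\sum_{i=1}^{k}\Phi_G(A_i).
\]
The subspace $W'=W\cap \phi_1^{\perp}$ has dimension at least $k-1$ (one linear constraint on a $k$-dimensional space); applying the Rayleigh-quotient bound from Step~1 to any $L^2(\mu)$-orthonormal basis of $W'$ gives $\mathrm{tr}(\mathfrak{L}|_{W'})\geq (k-1)(\tau_{min}-\eta)$. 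The remaining (at most one-dimensional) orthogonal complement of $W'$ inside $W$ contributes non-negatively to the trace because $\mathfrak{L}\geq 0$, so $\mathrm{tr}(\mathfrak{L}|_W)\geq (k-1)(\tau_{min}-\eta)$.

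Combining the two expressions for $\mathrm{tr}(\mathfrak{L}|_W)$ yields $\sum_{i=1}^{k}\Phi_G(A_i)\geq (k-1)(\tau_{min}-\eta)$, and the pigeonhole step $\max\geq \mathrm{avg}$ produces $\max_i \Phi_G(A_i)\geq (1-\tfrac{1}{k})(\tau_{min}-\eta)$; taking the infimum over $\mathcal{D}_k$ finishes the proof. The only point that requires care is that $\mathcal{A}$ is not symmetric as a matrix but only as an operator on $L^2(\mu)$, so the whole argument must be done in this weighted Hilbert space in order that the spectral theorem applies and the eigenvectors of $\mathcal{A}$ form a $\mu$-orthonormal basis. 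No perturbation arguments, unlike in the earlier sections, are needed here; the factor $(1-\tfrac{1}{k})$ is forced by the codimension-$1$ loss in passing from $W$ to $W\cap \phi_1^\perp$.
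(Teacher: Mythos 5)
Your proposal is correct, but it follows a genuinely different route from the paper. The paper's proof is a single-test-function argument in the style of the Alon--Chung expander mixing lemma: it picks the part $A_1$ of smallest measure (so $\mu(A_1)\le \mu(V)/k$), forms the mean-zero function equal to $1/\mu(A_1)$ on $A_1$ and $-1/\mu(V\setminus A_1)$ off it, bounds $|\langle Af,f\rangle|$ by $\eta\sum_x\mu_xf(x)^2$, and deduces $E(A_1,\overline{A_1})\ge(\tau_{min}-\eta)\,\alpha(1-\alpha)\mu(V)$ with $\alpha=\mu(A_1)/\mu(V)\le 1/k$; the factor $1-\tfrac1k$ comes from $1-\alpha$. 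You instead run a trace/min--max argument on the $k$-dimensional span $W$ of the normalized indicators, evaluating $\sum_j\langle\mathfrak{L}e_j,e_j\rangle_\mu$ in two $\mu$-orthonormal bases and losing one dimension by intersecting with $\phi_1^{\perp_\mu}$; here $1-\tfrac1k$ comes from averaging $k$ conductances of which $k-1$ are bounded below. Both yield the theorem, and your route buys one genuine point of robustness: you orthogonalize against the actual top eigenvector $\phi_1$ of $\mathcal{A}$, whereas the paper's step ``$\sum_x\mu_xf(x)=0$ implies $|\langle Af,f\rangle|\le\eta\|f\|_\mu^2$'' tacitly identifies $\phi_1$ with the constant vector, which is literally valid only when $d(i)/\mu_i$ is constant (e.g.\ the setting of Corollary~\ref{cor:main4}); your version needs no such assumption. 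Two minor points of presentation: state explicitly that $\mathrm{tr}(\mathfrak{L}|_W)$ means $\sum_j\langle\mathfrak{L}e_j,e_j\rangle_\mu$ over a $\mu$-orthonormal basis of $W$ and that this is basis-independent, so the two evaluations may be compared; and note that the reduction to $\tau_{min}>\eta$ is not actually needed, since every step of the trace inequality holds regardless of the sign of $\tau_{min}-\eta$.
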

When $\mu_i=d(i)$, we have $1-\eta_i=\lambda_i(G)$ and $\tau_{min}=1$. By the fact that $\eta_2$ must be non-negative if the graph is not complete, we have the following corollary.

\begin{cor}\label{cor:main4}
       Let $G=(V,E,w,\mu)$ be a weighted and not complete graph with $\mu_i=d(i)$.  Then we have $$\rho_k(G)\geq (1-\frac{1}{k})\min \{\lambda_2(G), 2-\lambda_n(G)\}.$$
\end{cor}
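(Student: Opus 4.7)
The plan is to prove Theorem~\ref{thm:main4} by combining a Ky~Fan trace inequality for the normalized Laplacian $\mathfrak{L}$ with a Weyl-type spectral-shift estimate; Corollary~\ref{cor:main4} then follows from an elementary check that $\eta_2\geq 0$ whenever $G$ is not complete. The whole argument bypasses the nodal-domain and perturbation machinery of the earlier sections.

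\textbf{Trace step.} First I would rewrite $\Phi_G(A)=\langle\mathbf{1}_A,\mathfrak{L}\mathbf{1}_A\rangle_\mu/\|\mathbf{1}_A\|_\mu^2$, where $\langle f,g\rangle_\mu=\sum_v\mu_vf(v)g(v)$ is the $\mu$-weighted inner product. For any disjoint nonempty $A_1,\dots,A_k$, the normalized indicators $e_i=\mathbf{1}_{A_i}/\sqrt{\mu(A_i)}$ are $\mu$-orthonormal; conjugating to the symmetric matrix $M^{-1/2}LM^{-1/2}$ (which shares the spectrum of $\mathfrak{L}$), the $M^{1/2}e_i$ become standard-orthonormal, so Ky~Fan's trace inequality for the smallest $k$ eigenvalues yields
\[\sum_{i=1}^k\Phi_G(A_i)=\sum_{i=1}^k\langle e_i,\mathfrak{L}e_i\rangle_\mu\ \geq\ \sum_{j=1}^k\lambda_j(\mathfrak{L}).\]
Since $\kappa\equiv 0$ forces $\mathfrak{L}\mathbf{1}=0$, we have $\lambda_1(\mathfrak{L})=0$ and $\lambda_j(\mathfrak{L})\geq\lambda_2(\mathfrak{L})$ for $j\geq 2$, so the right-hand side is at least $(k-1)\lambda_2(\mathfrak{L})$. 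Using $\max_i\Phi_G(A_i)\geq\frac{1}{k}\sum_i\Phi_G(A_i)$ and taking the infimum over partitions gives $\rho_k(G)\geq\left(1-\frac{1}{k}\right)\lambda_2(\mathfrak{L})$.

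\textbf{Spectral-shift step.} Next, to lower bound $\lambda_2(\mathfrak{L})$, decompose $\mathfrak{L}=T-\mathcal{A}$ where $T$ is diagonal with $T_{vv}=d(v)/\mu_v\geq \tau_{min}$. The difference $\mathfrak{L}-(\tau_{min}I-\mathcal{A})=T-\tau_{min}I$ is diagonal and PSD, hence $\mathfrak{L}\succeq \tau_{min}I-\mathcal{A}$ as self-adjoint operators on the $\mu$-ip (equivalently after the $M^{1/2}$-symmetrization). Weyl's monotonicity principle then yields $\lambda_j(\mathfrak{L})\geq \tau_{min}-\eta_j$ for every $j$; in particular $\lambda_2(\mathfrak{L})\geq \tau_{min}-\eta_2\geq \tau_{min}-\eta$ because $\eta_2\leq|\eta_2|\leq\eta$. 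Combined with the trace step this proves Theorem~\ref{thm:main4}.

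\textbf{Corollary and main obstacle.} For Corollary~\ref{cor:main4} one has $\mu_i=d(i)$, so $\tau\equiv 1$, $\eta_1=1$, $\mathfrak{L}=I-\mathcal{A}$, and $\lambda_j=1-\eta_j$. If $G$ is not complete, choose non-adjacent $u\nsim v$ and the test function $f=d(v)\delta_u-d(u)\delta_v$, which is $d$-orthogonal to $\mathbf{1}$ and satisfies $\langle f,\mathcal{A}f\rangle_d=2w_{uv}f(u)f(v)=0$; the variational characterization of $\eta_2$ then forces $\eta_2\geq 0$. Since $\operatorname{Tr}\mathcal{A}=0$ (no self-loops) and $\eta_1>0$, we also have $\eta_n<0$, so $\eta=\max(\eta_2,-\eta_n)$ and
\[\tau_{min}-\eta=\min(1-\eta_2,\,1+\eta_n)=\min(\lambda_2,\,2-\lambda_n),\]
making the corollary an immediate specialization of the theorem. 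The only non-obvious step is the spectral shift, which turns the apparently cosmetic diagonal inequality $T\succeq \tau_{min}I$ into a usable lower bound on $\lambda_2(\mathfrak{L})$ via Weyl; once this is secured, the remainder is routine averaging and bookkeeping.
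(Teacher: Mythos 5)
Your proposal is correct, but it reaches Corollary~\ref{cor:main4} by a genuinely different route than the paper. The paper's proof of Theorem~\ref{thm:main4} uses only \emph{one} of the $k$ sets: it takes the set $A_1$ of smallest $\mu$-measure (so $\mu(A_1)\leq\mu(V)/k$), tests the two-valued, $\mu$-mean-zero function $f=\tfrac{1}{\mu(A_1)}\mathbf{1}_{A_1}-\tfrac{1}{\mu(V\setminus A_1)}\mathbf{1}_{V\setminus A_1}$ against the bound $|\langle Af,f\rangle|\leq\eta\sum_x\mu_xf(x)^2$, and extracts $E(A_1,\overline{A_1})\geq(\tau_{min}-\eta)\alpha(1-\alpha)\mu(V)$ directly; the corollary then follows from the same observations you make about $\eta_2\geq0$ and $\eta_n<0$ (which the paper asserts without the explicit test function $d(v)\delta_u-d(u)\delta_v$ — your verification of that detail is a welcome addition). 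Your argument instead averages over all $k$ indicators via Ky~Fan and then converts $\lambda_2(\mathfrak{L})$ into $\tau_{min}-\eta_2$ by a Weyl shift; both steps are sound (the $e_i$ are indeed $\mu$-orthonormal, $\langle e_i,\mathfrak{L}e_i\rangle_\mu=\Phi_G(A_i)$ since $\kappa\equiv0$ in this section, and $T-\tau_{min}I$ is diagonal with nonnegative entries, hence PSD in the $\mu$-inner product). What your route buys is the intermediate inequality $\rho_k(G)\geq\bigl(1-\tfrac{1}{k}\bigr)\lambda_2(G)$, which is strictly stronger than both the theorem and the corollary, since $\lambda_2(\mathfrak{L})\geq\tau_{min}-\eta_2\geq\tau_{min}-\eta$ and $\min\{\lambda_2,2-\lambda_n\}\leq\lambda_2$: the term $2-\lambda_n$ never binds in your derivation, and for (near-)bipartite graphs, where the stated corollary degenerates, your intermediate bound remains nontrivial. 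It is worth noting that the heavy machinery is not actually needed for this strengthening: running the paper's own single-set computation with the test function $\mathbf{1}_{A_1}-\alpha\mathbf{1}$ against the Rayleigh characterization of $\lambda_2$ gives $\Phi_G(A_1)\geq(1-\alpha)\lambda_2\geq(1-\tfrac{1}{k})\lambda_2$ in two lines. So your proof is valid and in fact proves more than what is claimed, at the cost of invoking Ky~Fan and Weyl where a bare-hands one-set estimate suffices.
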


The following proof is inspired by the proof of Lemma 2.3 in \cite{AC86}.
\begin{proof}[Proof of Theorem \ref{thm:main4}]
Given any $(A_1,A_2,\ldots,A_k)$ such that $\emptyset\neq A_1,\ldots,A_k\subset V\text{ and }A_i\cap A_j= \emptyset$. For any set $S\subset V$, we define $\mu(S):=\sum_{x\in S}\mu_x$. Without loss of generality, we assume $\mu(A_1)\leq \mu(A_2)\leq \cdots \leq \mu(A_{k-1})\leq \mu(A_k)$. Then we directly have $\mu(A_1)\leq \frac{\mu(V)}{k}$. Define $\alpha:=\frac{\mu(A_1)}{\mu(V)}$. By definition, we have $\alpha\leq \frac{1}{k}$.
 Define a function $f:V\to \mathbb{R}$ as follows \begin{equation*}
f(x)=\left\{
    \begin{aligned}
        &\frac{1}{\mu(A_1)}\quad\quad\quad && x\in A_1,\\
        &  \frac{-1}{\mu(V\setminus A_1)}\quad\quad\quad && x\in V\setminus A_1.
    \end{aligned}
    \right.
\end{equation*}
 Since $\sum_{x\in V}\mu_xf(x)=0$, we have $$|\langle Af,f \rangle|\leq \eta \sum_{x\in V}\mu_xf^2(x)=\eta \left(\frac{1}{\mu(A_1)}+\frac{1}{\mu(V\setminus A_1)}\right).$$ 
 Next, we compute
 \begin{equation*}
     \begin{aligned}
         |\langle Af,f \rangle|&=\left|\sum_{x\in V}d_xf^2(x)-\sum_{\{x,y\}\in E}w_{xy}\left(f(x)-f(y)\right)^2\right|\\
         &=\left|\sum_{x\in V}d_xf^2(x)-E(A_1,\overline{A_1})\left(\frac{1}{\mu(A_1)}+\frac{1}{\mu(V\setminus A_1)}\right)^2\right|,
     \end{aligned}
 \end{equation*}
where $E(A_1,\overline{A_1})=\sum_{x\in A_1}\sum_{y\in \overline{A_1}}w_{xy}$. Combining the above two inequalities, we obtain
 \begin{equation*}
     \begin{aligned}
        E(A_1,\overline{A_1})\left(\frac{1}{\mu(A_1)}+\frac{1}{\mu(V\setminus A_1)}\right)^2&\geq \sum_{x\in V}d_xf^2(x)-\eta \left(\frac{1}{\mu(A_1)}+\frac{1}{\mu(V\setminus A_1)}\right)\\
         &\geq (\tau_{min}-\eta) \left(\frac{1}{\mu(A_1)}+\frac{1}{\mu(V\setminus A_1)}\right).
     \end{aligned}
 \end{equation*}
 This implies 
$$ E(A_1,\overline{A_1})\geq (\tau_{min}-\eta)\alpha(1-\alpha)\mu(V).$$
Then we have $$ \max_{1\leq i\leq k}\Phi_{G}(A_i)\geq \Phi_G(A_1)=\frac{E(A_1,\overline{A_1})}{\mu(A_1)}\geq (\tau_{min}-\eta)(1-\alpha)\geq (\tau_{min}-\eta)(1-\frac{1}{k}).$$
This theorem is followed by the arbitrariness of $(A_1,A_2,\cdots A_k)$.
\end{proof}

\section{Higher-order Cheeger inequalities for signed graph}
\label{sec:signed}
In this section, we show that it is direct to extend Theorem \ref{thm:main} to the case of the signed graphs. 

A signed graph $\Gamma=(G,\sigma)$ is a weighted graph $G=(V,E,w,\mu,\kappa )$ with a signature $\sigma:E\to \{-1,+1\}$.  The adjacency matrix of $\Gamma$ is $A^{\sigma}=\{a_{ij}\}_{1\leq i,j\leq n}$ where $a_{ij}=\sigma_{ij}w_{ij}$ if $i\sim j$ and $a_{ij}=0$ if $i\nsim j$. The weighted degree of a vertex $i$ is defined by $d(i)=\sum_{j\sim i}w_{ij}$.  Let $D$, $K$ and $M$ be defined the same as in Section \ref{sec:pre}. The normalized Laplacian of $\Gamma$ is defined by  $\mathfrak{L}^{\sigma}=M^{-1}(D-A^{\sigma}+K)$.

Next, we review the definition of signed multi-way Cheeger constants. Given any two subsets $V_1,V_2\subset V$, we define the following notations
$$  |E^{+}(V_1,V_2)|:=\sum_{x\in V_1}\sum_{\substack{y\in V_2\\ \sigma_{xy}=+ 1}}w_{xy},\quad|E^{-}(V_1,V_2)|:=\sum_{x\in V_1}\sum_{\substack{y\in V_2\\ \sigma_{xy}=-1}}w_{xy}\quad\text{ and }\quad|\partial V_1|:=\sum_{x\in V_1}\sum_{y\not\in V_1}w_{xy},$$
and write $|E^{\pm }(V_1)|=|E^{\pm }(V_1,V_1)|$  for simplicity. Let $\beta^\sigma(V_{1},V_{2})$ be defined as follows: \[\beta^\sigma(V_{1},V_{2}):=\frac{2|E^+(V_{1},V_{2})|+|E^-(V_{1})|+|E^-(V_{2})|+|\partial(V_{1}\cup V_{2})|}{\sum_{x\in V_1\cup V_2}\mu_x}.\]

Given any $1\leq k\leq n$, the $k$-way signed Cheeger constant $\rho_k^\sigma(\Gamma)$ is defined as follows \cite{AL20}
\begin{equation*}
    \rho_k^\sigma(\Gamma):=\min_{\{(V_{2i-1}, V_{2i})\}_{i=1}^k}\max_{i=1,\ldots,k}\beta^\sigma(V_{2i-1},V_{2i}),
\end{equation*} 
where the minimum is taken over all possible $k$-sub-bipartitions, i.e., $(V_{2i-1}\cup V_{2i})\bigcap (V_{2j-1}\cup V_{2j})=\emptyset$ for any $i\neq j$, and $V_{2l-1}\cup V_{2l}\neq \emptyset$, $V_{2l-1}\cap V_{2l}=\emptyset$ for any $l$.

The proof of Theorem \ref{thm:main} uses the concept of strong nodal domains. The definition and the lower bound of strong nodal domains of signed graphs were established in \cite{GL23} and the signed version of Lemma \ref{lemma:main} was established in \cite[Theorem 8]{GLZ23}. After the preparation, we use Lemma \ref{lemma:nozero} the same as the proof of Theorem \ref{thm:main} to get the following theorem.
\begin{theorem}
    Let $\Gamma=(G,\sigma)$ be a connected signed graph with $G=(V,E,w,\mu,\kappa)$ and $\kappa\geq 0$. Let $\{\lambda^{\sigma}_i\}_{i=1}^n$ be the all eigenvalues of the normalized Laplacian of $\Gamma$ in ascending order. Then we have
     $$\rho_{k-\ell}^{\sigma}(\Gamma)\leq \sqrt{2\tau_{\Gamma}\lambda^{\sigma}_k},$$ 
 where $\ell=|E|-|V|+1$ and $\tau_\Gamma=\max_{i\in V}\frac{d(i)}{\mu_i}$.
\end{theorem}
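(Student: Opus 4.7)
The plan is to follow the proof of Theorem \ref{thm:main} line by line, replacing each of its three main ingredients with its signed counterpart: the generic simplicity and no-zero statement (Lemma \ref{lemma:nozero}), the lower bound on the number of strong nodal domains (Theorem \ref{thm:lower-bound}), and the single-eigenfunction Cheeger estimate (Lemma \ref{lemma:main}). Signed versions of the last two are already available in \cite{GL23} and \cite[Theorem 8]{GLZ23}, and I claim that Lemma \ref{lemma:nozero} carries over essentially verbatim to $\mathfrak{L}^{\sigma}$.

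The steps, in order, are as follows. First, I would perturb: choose sequences of positive edge weights $w^{(m)}\to w$ and non-negative potentials $\kappa^{(m)}\to\kappa$, keeping the edge set $E$ and the signature $\sigma$ fixed, and form the signed weighted graphs $\Gamma^{(m)}=(G^{(m)},\sigma)$. Symmetrizing the signed normalized Laplacian yields $M^{-1/2}(D^{(m)}+K^{(m)}-A^{\sigma,(m)})M^{-1/2}$, which is similar to $\mathfrak{L}^{\sigma,(m)}$ and whose off-diagonal entries on edges are $-\sigma_{ij}w^{(m)}_{ij}/\sqrt{\mu_i\mu_j}$; since multiplying a continuous random variable by $\sigma_{ij}\in\{\pm 1\}$ leaves it continuous, the hypotheses of Lemma \ref{lemma:nozero} are satisfied. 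Therefore, almost surely in our perturbation, $\mathfrak{L}^{\sigma,(m)}$ has simple spectrum and all its eigenfunctions are nowhere zero.

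Second, fix $k$, let $f^{(m)}_k$ be an eigenfunction for $\lambda^{\sigma,(m)}_k$, and apply the signed nodal domain lower bound of \cite{GL23} to obtain $\mathfrak{S}(f^{(m)}_k)\geq k-\ell$. Then the signed analogue of Lemma \ref{lemma:main} from \cite[Theorem 8]{GLZ23} gives $\rho^\sigma_{\mathfrak{S}(f^{(m)}_k)}(\Gamma^{(m)})\leq\sqrt{2\tau_{\Gamma^{(m)}}\lambda^{\sigma,(m)}_k}$. Combined with the signed monotonicity $\rho^\sigma_{k-\ell}(\Gamma^{(m)})\leq\rho^\sigma_{\mathfrak{S}(f^{(m)}_k)}(\Gamma^{(m)})$ (proved by the same averaging argument as in \cite[Lemma 7]{GLZ23}), this yields the desired inequality for the perturbed graph. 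Passing to the limit $m\to\infty$, using continuity of $\lambda^\sigma_k$, $\tau_\Gamma$, and $\rho^\sigma_{k-\ell}$ in $(w,\kappa)$, proves the theorem.

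The only step requiring genuine verification is the first: that Lemma \ref{lemma:nozero} applies to the signed normalized Laplacian. Its proof (via Lemma \ref{lemma:det}) only manipulates minors and conditional distributions of the continuous random entries and never uses the signs of off-diagonals, so the signed case is handled without modification. Everything else in the argument is a direct translation of the proof of Theorem \ref{thm:main}.
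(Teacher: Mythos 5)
Your proposal is correct and follows exactly the route the paper indicates: perturb $(w,\kappa)$, invoke Lemma \ref{lemma:nozero} for the (symmetrized) signed normalized Laplacian, combine the signed nodal domain lower bound of \cite{GL23} with the signed version of Lemma \ref{lemma:main} from \cite[Theorem 8]{GLZ23} and monotonicity, then pass to the limit. Your explicit check that the signature only multiplies continuous random entries by $\pm 1$, so Lemma \ref{lemma:nozero} applies verbatim, is precisely the point the paper leaves implicit.
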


\section*{Acknowledgement}
This work is supported by the National Key R and D Program of China 2020YFA0713100, the National Natural Science Foundation of China (No. 12031017 and No. 12431004), and Innovation Program for Quantum Science and Technology 2021ZD0302902. We are very grateful to Shiping Liu and Guangyi Zou for their inspiring discussions, especially on the proof of Lemma \ref{lemma:nozero}.

\end{document}